\newtheorem{thm}{Theorem}
\newtheorem{lem}{Lemma}
\begin{document}

\title{On divisors of Lucas and Lehmer numbers}
\author{C.L.~Stewart}
\address{C.L.~Stewart \\ Department of Pure Mathematics \\ University of Waterloo \\ Waterloo, Ontario \\ Canada \\ email: cstewart@uwaterloo.ca}

\begin{abstract}
Let $u_n$ be the n-th term of a Lucas sequence or a Lehmer sequence.  In this article we shall establish an estimate from below for the greatest prime factor of $u_n$ which is of the form $n \exp (\log n/104 \log\log n)$.  In so doing we are able to resolve a question of Schinzel from 1962 and a conjecture of Erd\H{o}s from 1965.  In addition we are able to give the first general improvement on results of Bang from 1886 and Carmichael from 1912.
\end{abstract}

\subjclass[2000]{11B39, 11J86.}

\keywords{greatest prime factor, recurrence sequences, Lucas numbers. \\ Research supported in part by the Canada Research Chairs Program and by Grant A3528 from the Natural Sciences and Engineering Research Council of Canada.}

\maketitle

\section{Introduction}

Let $\alpha$ and $\beta$ be complex numbers such that $\alpha+\beta$ and $\alpha\beta$ are non-zero coprime integers and $\alpha/\beta$ is not a root of unity. Put
$$
u_n=(\alpha^n-\beta^n)/(\alpha-\beta)\quad \text{for}\ n\geq 0.
$$
The integers $u_n$ are known as Lucas numbers and their divisibility properties have been studied by Euler, Lagrange, Gauss, Dirichlet and others (see \cite[Chapter XVII]{Di}). In 1876 Lucas \cite{Lu1} announced several new results concerning Lucas sequences $(u_n)^\infty_{n=0}$ and in a substantial paper in 1878 \cite{Lu2} he gave a systematic treatment of the divisibility properties of Lucas numbers and indicated some of the contexts in which they appeared. Much later Matijasevic \cite{M} appealed to these properties in his solution of Hilbert's 10th problem. 

For any integer $m$ let $P(m)$ denote the greatest prime factor of $m$ with the convention that $P(m)=1$ when $m$ is 1, 0 or $-1.$ In 1912 Carmichael \cite{Ca} proved that if $\alpha$ and $\beta$ are real and $n>12$ then
\begin{equation} \label{eq1}
P(u_n)\geq n-1.
\end{equation}

Results of this character had been established earlier for integers of the form $a^n-b^n$ where $a$ and $b$ are integers with $a>b>0.$ Indeed Zsigmondy \cite{Z} in 1892 and Birkhoff and Vandiver \cite{BV} in 1904 proved that for $n>2$
\begin{equation} \label{eq2}
P(a^n-b^n)\geq n+1,
\end{equation}
while in the special case that $b=1$ the result is due to Bang \cite{Ba} in 1886.

In 1930 Lehmer \cite{Le} showed that the divisibility properties of Lucas numbers hold in a more general setting. Suppose that $(\alpha+\beta)^2$ and $\alpha\beta$ are coprime non-zero integers with $\alpha/\beta$ not a root of unity and, for $n>0,$ put
$$
\tilde{u}_n=\begin{cases} (\alpha^n-\beta^n)/(\alpha-\beta) & \text{for $n$ odd,} \\
(\alpha^n-\beta^n)/(\alpha^2-\beta^2) & \text{for $n$ even.}
\end{cases}
$$
Integers of the above form have come to be known as Lehmer numbers. Observe that Lucas numbers are also Lehmer numbers up to a multiplicative factor of $\alpha+\beta$ when $n$ is even. In 1955 Ward \cite{W} proved that if $\alpha$ and $\beta$ are real then for $n>18,$
\begin{equation} \label{eq3}
P(\tilde{u}_n)\geq n-1,
\end{equation}
and four years later Durst \cite{Du} observed that \eqref{eq3} holds for $n>12.$

A prime number $p$ is said to be a primitive divisor of a Lucas number $u_n$ if $p$ divides $u_n$ but does not divide $(\alpha-\beta)^2u_2\cdots u_{n-1}.$ Similarly $p$ is said to be a primitive divisor of a Lehmer number $\tilde{u}_n$ if $p$ divides $\tilde{u}_n$ but does not divide $(\alpha^2-\beta^2)^2\tilde{u}_3\cdots\tilde{u}_{n-1}.$ For any integer $n>0$ and any pair of complex numbers $\alpha$ and $\beta,$ we denote the $n$-th cyclotomic polynomial in $\alpha$ and $\beta$ by $\Phi_n(\alpha,\beta),$ so
$$
\Phi_n(\alpha,\beta)=\prod^n_{\substack{j=1 \\ (j,n)=1}}(\alpha-\zeta^j\beta),
$$
where $\zeta$ is a primitive $n$-th root of unity. One may check, see \cite{St2}, that $\Phi_n(\alpha,\beta)$ is an integer for $n>2$ if $(\alpha+\beta)^2$ and $\alpha\beta$ are integers. Further, see Lemma 6 of \cite{St2}, if, in addition, $(\alpha+\beta)^2$ and $\alpha\beta$ are coprime non-zero integers, $\alpha/\beta$ is not a root of unity and $n>4$ and $n$ is not 6 or 12 then $P(n/(3,n))$ divides $\Phi_n(\alpha,\beta)$ to at most the first power and all other prime factors of $\Phi_n(\alpha,\beta)$ are congruent to 1 or $-1$ modulo $n.$ The last assertion can be strengthened to all other prime factors of $\Phi_n(\alpha,\beta)$ are congruent to $1\pmod{n}$ in the case that $\alpha$ and $\beta$ are coprime integers.

Since
\begin{equation} \label{eq4}
\alpha^n-\beta^n=\prod_{d\mid n}\Phi_d(\alpha,\beta),
\end{equation}
$\Phi_1(\alpha,\beta)=\alpha-\beta$ and $\Phi_2(\alpha,\beta)=\alpha+\beta$ we see that if $n$ exceeds 2 and $p$ is a primitive divisor of a Lucas number $u_n$ or Lehmer number $\tilde{u}_n$ then $p$ divides $\Phi_n(\alpha,\beta).$  Further, a primitive divisor of a Lucas number $u_n$ or Lehmer number $\tilde{u}_n$ is not a divisor of $n$ and so it is congruent to $\pm 1\pmod{n}.$ Estimates \eqref{eq1}, \eqref{eq2} and \eqref{eq3} follow as consequences of the fact that the $n$-th term of the sequences in question possesses a primitive divisor. It was not until 1962 that this approach was extended to the case where $\alpha$ and $\beta$ are not real by Schinzel \cite{Sc2}. He proved, by means of an estimate for linear forms in two logarithms of algebraic numbers due to Gelfond \cite{G}, that there is a positive number $C,$ which is effectively computable in terms of $\alpha$ and $\beta,$ such that if $n$ exceeds $C$ then $\tilde{u}_n$ possesses a primitive divisor. In 1974 Schinzel \cite{Sc6} employed an estimate of Baker \cite{B1} for linear forms in the logarithms of algebraic numbers to show that $C$ can be replaced by a positive number $C_0,$ which does not depend on $\alpha$ and $\beta,$ and in 1977 Stewart \cite{St3} showed $C_0$ could be taken to be $e^{452}4^{67}.$ This was subsequently refined by Voutier \cite{Va1,Va2} to 30030. In addition Stewart \cite{St3} proved that $C_0$ can be taken to be 6 for Lucas numbers and 12 for Lehmer numbers with finitely many exceptions and that the exceptions could be determined by solving a finite number of Thue equations. This program was successfully carried out by Bilu, Hanrot and Voutier \cite{BHV} and as a consequence they were able to show that for $n>30$ the $n$-th term of a Lucas or Lehmer sequence has a primitive divisor. Thus \eqref{eq1} and \eqref{eq3} hold for $n>30$ without the restriction that $\alpha$ and $\beta$ be real.

In 1962 Schinzel \cite{Sc1} asked if there exists a pair of integers $a,b$ with $ab$ different from $\pm 2c^2$ and $\pm c^h$ with $h\geq 2$ for which $P(a^n-b^n)$ exceeds $2n$ for all sufficiently large $n.$ In 1965 Erd\H{o}s \cite{E} conjectured that
$$
\frac{P(2^n-1)}{n}\rightarrow\infty\quad \text{as}\ n\rightarrow\infty.
$$
Thirty five years later Murty and Wong \cite{MW} showed that Erd\H{o}s' conjecture is a consequence of the $abc$ conjecture \cite{SY1}. They proved, subject to the $abc$ conjecture, that if $\varepsilon$ is a positive real number and $a$ and $b$ are integers with $a>b>0$ then
$$
P(a^n-b^n)>n^{2-\varepsilon},
$$
provided that $n$ is sufficiently large in terms of $a,$ $b$ and $\varepsilon.$ In 2004 Murata and Pomerance \cite{MP} proved, subject to the Generalized Riemann Hypothesis, that 
\begin{equation} \label{eq5}
P(2^n-1)>n^{4/3}/\log\log n
\end{equation}
for a set of positive integers $n$ of asymptotic density 1.

The first unconditional refinement of \eqref{eq2} was obtained by Schinzel \cite{Sc1} in 1962. He proved that if $a$ and $b$ are coprime and $ab$ is a square or twice a square then
$$
P(a^n-b^n)\geq 2n+1
$$
provided that one excludes the cases $n=4,6,12$ when $a=2$ and $b=1.$ Schinzel proved his result by showing that the term $a^n-b^n$ was divisible by at least 2 primitive divisors. To prove this result he appealed to an Aurifeuillian factorization of $\Phi_n$. Rotkiewicz \cite{R} extended Schinzel's argument to treat Lucas numbers and then Schinzel \cite{Sc3,Sc4,Sc5} in a sequence of articles gave conditions under which Lehmer numbers possess at least 2 primitive divisors and so under which \eqref{eq3} holds with $n+1$ in place of $n-1,$ see also \cite{J}. In 1975 Stewart \cite{St1} proved that if $\kappa$ is a positive real number with $\kappa<1/\log 2$ then $P(a^n-b^n)/n$ tends to infinity with $n$ provided that $n$ runs through those integers with at most $\kappa\log\log n$ distinct prime factors, see also \cite{ES}. Stewart \cite{St2} in the case that $\alpha$ and $\beta$ are real and Shorey and Stewart \cite{SS} in the case that $\alpha$ and $\beta$ are not real generalized this work to Lucas and Lehmer sequences. Let $\alpha$ and $\beta$ be complex numbers such that $(\alpha+\beta)^2$ and $\alpha\beta$ are non-zero relatively prime integers with $\alpha/\beta$ not a root of unity. For any positive integer $n$ let $\omega(n)$ denote the number of distinct prime factors of $n$ and put $q(n)=2^{\omega(n)},$ the number of square-free divisors of $n.$ Further let $\varphi(n)$ be the number of positive integers less than or equal to $n$ and coprime with $n.$ They showed, recall \eqref{eq4}, if $n\ (>3)$ has at most $\kappa\log\log n$ distinct prime factors then
\begin{equation} \label{eq6}
P(\Phi_n(\alpha,\beta))>C(\varphi(n)\log n)/q(n),
\end{equation}
where $C$ is a positive number which is effectively computable in terms of $\alpha,$ $\beta$ and $\kappa$ only. The proofs depend on lower bounds for linear forms in the logarithms of algebraic numbers in the complex case when $\alpha$ and $\beta$ are real and in the $p$-adic case otherwise.

The purpose of the present paper is to answer in the affirmative the question posed by Schinzel \cite{Sc1} and to prove Erd\H{o}s' conjecture in the wider context of Lucas and Lehmer numbers.

\begin{thm} \label{thm1}
Let $\alpha$ and $\beta$ be complex numbers such that $(\alpha+\beta)^2$ and $\alpha\beta$ are non-zero integers and $\alpha/\beta$ is not a root of unity. There exists a positive number $C,$ which is effectively computable in terms of $\omega(\alpha\beta)$ and the discriminant of $\mathbb{Q}(\alpha/\beta),$ such that for $n>C,$
\begin{equation} \label{eq7}
P(\Phi_n(\alpha,\beta))>n\exp(\log n/104\log\log n).
\end{equation}
\end{thm}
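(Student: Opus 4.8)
The plan is to argue by contradiction. Suppose that $n$ is large and that, contrary to \eqref{eq7}, every prime factor of $\Phi_n(\alpha,\beta)$ is at most $Y:=n\exp(\log n/104\log\log n)$; I want to force $n$ to be smaller than a bound of the shape allowed by the theorem. There is no loss in assuming $|\alpha|\ge|\beta|$ and, after a routine reduction, that $(\alpha+\beta)^2$ and $\alpha\beta$ are coprime, so that the facts recorded after \eqref{eq4} — in particular Lemma~6 of \cite{St2} — are available. Then $\alpha$ and $\beta$ are either both real or complex conjugates; moreover $|\alpha|$ exceeds an absolute constant larger than $1$, and the logarithmic height satisfies $h(\alpha/\beta)=O(\log|\alpha|)$. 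Both of these last facts are elementary consequences of $\alpha\beta$ and $(\alpha+\beta)^2$ being integers with $\alpha/\beta$ not a root of unity (in the conjugate case one uses $0\le(\alpha+\beta)^2\le4\alpha\beta$ and $|\alpha|^2=\alpha\beta$).

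First I would pin down the size of $\Phi_n(\alpha,\beta)$. Using $|\alpha-\zeta^{j}\beta|=|\alpha|\,|1-\zeta^{j}(\beta/\alpha)|$ together with $\prod_{(j,n)=1}(1-\zeta^{j}x)=\Phi_n(x)$ (the ordinary cyclotomic polynomial, valid for $n>2$), one gets $|\Phi_n(\alpha,\beta)|=|\alpha|^{\varphi(n)}\,|\Phi_n(\beta/\alpha)|$, and the identity $\Phi_n(x)=\prod_{d\mid n}(x^{n/d}-1)^{\mu(d)}$ writes $\log|\Phi_n(\beta/\alpha)|$ as a signed sum of at most $q(n)$ terms $\log|(\beta/\alpha)^{k}-1|$ with $1\le k\le n$. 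In the real case each such term lies in a bounded interval depending only on $\alpha,\beta$ (here $|\alpha|-|\beta|\ge1$); in the conjugate case $|\beta/\alpha|=1$, and a lower bound $\log|(\beta/\alpha)^{k}-1|\ge-\kappa\log n-O(1)$ with $\kappa=O(h(\alpha/\beta))=O(\log|\alpha|)$ follows from Baker's estimate for a linear form in two logarithms. Since $q(n)=n^{o(1)}$ while $\varphi(n)\gg n/\log\log n$, the correction term $O(q(n)\log n\cdot\log|\alpha|)$ is $o(\varphi(n)\log|\alpha|)$, so
\begin{equation*}
\log|\Phi_n(\alpha,\beta)|=(1+o(1))\,\varphi(n)\log|\alpha|
\end{equation*}
uniformly; note that the possibly large height $h(\alpha/\beta)$ has cancelled against the main term, which is how the dependence of the final constant $C$ is kept to $\omega(\alpha\beta)$ and the discriminant of $\mathbb{Q}(\alpha/\beta)$.

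Next I would exploit the structure of the prime factors. By the facts after \eqref{eq4}, at most one prime factor $p_0$ of $\Phi_n(\alpha,\beta)$ divides $n$, and then only to the first power, while every other prime factor $p$ satisfies $p\equiv\pm1\pmod n$ and $p\le Y$, hence is of the form $kn\pm1$ with $1\le k\le K:=\exp\bigl((1+o(1))\log n/104\log\log n\bigr)$. Thus $\Phi_n(\alpha,\beta)$ has at most $2K$ distinct prime factors, and the product of its distinct prime factors is at most $n\prod_{k\le K}(k^{2}n^{2}-1)\le(nK)^{2K}=\exp\bigl((2+o(1))K\log n\bigr)$, which is $\exp\bigl(o(\varphi(n)\log|\alpha|)\bigr)$ because $K\log n=n^{o(1)}=o(\varphi(n))$. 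Combined with the size estimate, this shows that essentially all of $\Phi_n(\alpha,\beta)$ must come from primes dividing it at least twice: writing $B$ for the largest divisor of $\Phi_n(\alpha,\beta)$ all of whose prime factors occur to exponent $\ge2$, one gets $\log B=(1+o(1))\varphi(n)\log|\alpha|$. The remaining task is to contradict this. For a prime $p\mid\Phi_n(\alpha,\beta)$ with $p\nmid n$, the rank of apparition of $p$ is $n$ and $\operatorname{ord}_p\Phi_n(\alpha,\beta)=\operatorname{ord}_{\mathfrak p}\bigl((\alpha/\beta)\zeta^{-j}-1\bigr)$ for a suitable prime $\mathfrak p\mid p$ of a field of degree $\le2$, a primitive $n$-th root of unity $\zeta$, and $1\le j\le n$; the plan is to bound these $p$-adic orders from above by Yu's estimate for linear forms in $p$-adic logarithms — again with $h(\alpha/\beta)=O(\log|\alpha|)$ as the controlling parameter, so the height cancels — and, summing over the at most $2K$ admissible primes and invoking the Brun--Titchmarsh count of primes in the progressions $\pm1\pmod n$ and the divisor bound $q(n)\le n^{O(1/\log\log n)}$, to squeeze the inequalities until $\log B$ is forced below $(1+o(1))\varphi(n)\log|\alpha|$, a contradiction for $n$ exceeding a bound depending only on $\omega(\alpha\beta)$ and the discriminant of $\mathbb{Q}(\alpha/\beta)$. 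The constant $104$ should emerge from balancing the exponent $\log Y/\log n=1+1/(104\log\log n)$ against these counting and transcendence estimates.

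The hard part will be exactly this last step — ruling out that $\Phi_n(\alpha,\beta)$ is dominated by high prime-power factors — and I expect it to be the main obstacle. The difficulty is that the relevant primes all have size comparable to $n$, and for such a prime the $p$-adic linear-forms-in-logarithms bound on $\operatorname{ord}_p\Phi_n(\alpha,\beta)$ is only of the same order of magnitude as the trivial bound $\log|\Phi_n(\alpha,\beta)|/\log p$; so the saving cannot come from a single prime but must be extracted from the simultaneous constraints that these primes are few and are confined to the two progressions $\pm1\pmod n$, and the argument must moreover be uniform in $\alpha$ and $\beta$. Making this bookkeeping work — and verifying that the transcendence constants, the distribution of primes in progressions, and the behaviour of $\varphi(n)$, $q(n)$ and $d(n)$ all mesh with the numerical value $104$ — is where the real work lies.
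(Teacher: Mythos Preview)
Your overall architecture is the paper's: reduce to the coprime case, lower-bound $|\Phi_n(\alpha,\beta)|$ via Baker (the paper's Lemmas~6--7), restrict the prime factors to the progressions $\pm1\pmod n$ via Lemma~1, upper-bound each $\operatorname{ord}_p\Phi_n(\alpha,\beta)\le\operatorname{ord}_\wp((\alpha/\beta)^n-1)$ by $p$-adic transcendence, count the primes by Brun--Titchmarsh, and compare. The gap is precisely where you flag ``the hard part'', and the mechanism you propose for closing it --- extracting the saving from the \emph{counting} of primes in the two progressions --- is not what the paper does and, as far as I can see, cannot produce the bound \eqref{eq7}.

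A direct application of a $p$-adic linear-forms estimate to $(\alpha/\beta)^n-1$ over $\mathbb Q(\alpha/\beta)$ gives at best $\operatorname{ord}_\wp((\alpha/\beta)^n-1)\ll (p/(\log p)^{c})\log|\alpha|\,\log n$ for some fixed small $c$. Feeding this and Brun--Titchmarsh into the comparison and cancelling $\log|\alpha|$ leads, with $K:=P_n/n$, to an inequality of the shape $K^2/\log K\gg (\log n)^{c-1}/(\log\log n)^{O(1)}$. For $c=2$ this forces only $K\gg(\log n/\log\log n)^{1/2}$; no fixed power of $\log p$ saved in the $p$-adic bound can defeat an assumed $K^2\le\exp(\log n/52\log\log n)$. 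The paper itself remarks (just after Theorem~2) that substituting Yamada's two-variable bound for its Lemma~8 yields only $P(a^n-1)\gg n(\log n/\log\log n)^{1/2}$ --- enough for Erd\H os, far short of \eqref{eq7}.

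What is missing is the paper's central device, Lemma~8: one \emph{artificially inflates} the number of terms in the $p$-adic form. After writing $\alpha/\beta=\alpha_0^{j}\theta^{2^v}$ one rewrites $\theta^m-1$ as $\alpha_1^m\gamma_2^m\cdots\gamma_k^m-1$ with $k=\lfloor\log p/(51.8\log\log p)\rfloor$ auxiliary multiplicatively independent $\wp$-adic units (small primes when $\mathbb Q(\alpha/\beta)=\mathbb Q$; quotients $\pi_i/\pi_i'$ of conjugate elements of small prime norm in the quadratic case). In Yu's estimate the $p$-dependence is $p\,(k/\log p)^k$, and with this choice of $k$ together with $\prod_{i\ge2} h(\gamma_i)\le(1.001\log k)^{k-1}$ one obtains
\[
\operatorname{ord}_\wp\bigl((\alpha/\beta)^n-1\bigr)\ <\ p\,\exp\!\Bigl(-\frac{\log p}{51.9\log\log p}\Bigr)\log|\alpha|\,\log n.
\]
In the quadratic case one must also control the parameter $\delta$ in Yu's theorem so that $p^{f_\wp}/\delta\ll p$ even when $f_\wp=2$; this is where Lemma~2 and the specific choice $\gamma_i=\pi_i/\pi_i'$ enter, and Lemma~4 (effective Chebotarev for the strict Hilbert class field of $\mathbb Q(\alpha/\beta)$) is needed to keep the heights $h(\gamma_i)$ small. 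It is this estimate, not any refinement of the prime counting, that makes the exponents balance: plugging it into the comparison gives $P_n^2\exp(-\log P_n/51.95\log\log P_n)\gg\varphi(n)^2\log(P_n/n)/\log n$, whence the constant $104\approx2\times 51.95$.

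One smaller point: your expression $\operatorname{ord}_{\mathfrak p}((\alpha/\beta)\zeta^{-j}-1)$ in ``a field of degree $\le2$'' cannot be right, since adjoining a primitive $n$th root of unity raises the degree by a factor $\varphi(n)$, which would destroy the constants in any $p$-adic estimate. The paper avoids this entirely by using the crude inequality $\operatorname{ord}_p\Phi_n(\alpha,\beta)\le\operatorname{ord}_\wp((\alpha/\beta)^n-1)$ and working throughout in $\mathbb Q(\alpha/\beta)$.
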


Our result, with the aid of \eqref{eq4} gives an improvement of \eqref{eq1}, \eqref{eq2}, \eqref{eq3} and \eqref{eq6}, answers the question of Schinzel and proves the conjecture of Erd\H{o}s. Specifically, if $a$ and $b$ are integers with $a>b>0$ then 
\begin{equation} \label{eq8}P(a^n-b^n)>n\exp(\log n/104\log\log n),
\end{equation}
for $n$ sufficiently large in terms of the number of distinct prime factors of $ab.$ We remark that the factor 104 which occurs on the right hand side of \eqref{eq7} has no arithmetical significance. Instead it is determined by the current quality of the estimates for linear forms in $p$-adic logarithms of algebraic numbers. In fact we could replace 104 by any number strictly larger than $14e^2.$ The proof depends upon estimates for linear forms in the logarithms of algebraic numbers in the complex and the $p$-adic case. In particular it depends upon \cite{SY2} where improvements upon the dependence on the parameter $p$ in the lower bounds for linear forms in $p$-adic logarithms of algebraic numbers are established. This allows us to estimate directly the order of primes dividing $\Phi_n(\alpha,\beta).$ The estimates are non-trivial for small primes and, coupled with an estimate from below for $|\Phi_n(\alpha,\beta)|,$ they allow us to show that we must have a large prime divisor of $\Phi_n(\alpha,\beta)$ since otherwise the total non-archimedean contribution from the primes does not balance that of $|\Phi_n(\alpha,\beta)|.$ By contrast for the proof of \eqref{eq6} a much weaker assumption on the greatest prime factor is imposed and it leads to the conclusion that then $\Phi_n(\alpha,\beta)$ is divisible by many small primes. This part of the argument from \cite{SS} and \cite{St2} was also employed in Murata and Pomerance's \cite{MP} proof of \eqref{eq5} and in estimates of Stewart \cite{St4} for the greatest square-free factor of $\tilde{u}_n.$

For any non-zero integer $x$ let ord$_px$ denote the $p$-adic order of $x$.  Our next result follows from a special case of Lemma 8 of this paper.  Lemma 8 yields a crucial step in the proof of Theorem 1.  An unusual feature of the proof of Lemma 8 is that we artificially inflate the number of terms which occur in the $p$-adic linear form in logarithms which appears in the argument.  We have chosen to highlight it in the integer case.

\begin{thm} \label{thm2}
Let $a$ and $b$ be integers with $a>b>0.$ There exists a number $C_1,$ which is effectively computable in terms of $\omega(ab),$ such that if $p$ is a prime number which does not divide $ab$ and which exceeds $C_1$ and $n$ is an integer with $n\geq 2$ then
\begin{equation} \label{eq8} 
\text{\rm ord}_p(a^n-b^n)<p\exp(-\log p/52\log\log p)\log a + \text{\rm ord}_pn.
\end{equation}
\end{thm}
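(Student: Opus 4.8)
The plan is to reduce, by a lifting‑the‑exponent argument, the bound on $\operatorname{ord}_p(a^n-b^n)$ to a bound on the $p$‑adic order of the single term $a^d-b^d$, where $d$ is the multiplicative order of $ab^{-1}$ modulo $p$, and then to estimate that order directly by a sharp lower bound for a $p$‑adic linear form in logarithms, the point being to set the form up with artificially many terms so as to exploit fully the dependence on $p$ in the estimate of \cite{SY2}.

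First I would normalise. Writing $g=(a,b)$, $a=ga'$, $b=gb'$, we have $a^n-b^n=g^n(a'^n-b'^n)$ with $p\nmid g$, so $\operatorname{ord}_p(a^n-b^n)=\operatorname{ord}_p(a'^n-b'^n)$ while $\log a'\le\log a$; thus we may assume $(a,b)=1$. Let $d$ be the multiplicative order of $ab^{-1}$ modulo $p$, so $d\mid p-1$, whence $p\nmid d$ and $d<p$. If $d\nmid n$ then $\operatorname{ord}_p(a^n-b^n)=0$ and there is nothing to prove; if $d\mid n$ then, $p$ being odd and coprime to $ab$, the classical lifting‑the‑exponent lemma applied to $a^d$ and $b^d$ gives
\[
\operatorname{ord}_p(a^n-b^n)=\operatorname{ord}_p(a^d-b^d)+\operatorname{ord}_p(n/d)\le\operatorname{ord}_p(a^d-b^d)+\operatorname{ord}_p n .
\]
Hence it suffices to prove $m:=\operatorname{ord}_p(a^d-b^d)=\operatorname{ord}_p\bigl((a/b)^d-1\bigr)<p\exp(-\log p/52\log\log p)\log a$.

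The crude bound $m\le\log(a^d-b^d)/\log p\le(p-1)\log a/\log p$ exceeds the target by a factor tending to infinity, and even estimating $(a/b)^d-1=a^db^{-d}-1$ as a two‑term $p$‑adic linear form in $\log a$ and $\log b$ produces only something of size $p(\log a)(\log b)/\log p$. The device is to inflate the form. I would fix an integer parameter $k$, to be optimised at the end (its order of magnitude being governed by the shape of the estimate of \cite{SY2}), select $k$ primes $\ell_1,\dots,\ell_k$ not dividing $ab$ and of controlled size --- it is here, through the need to guarantee enough such primes, that the dependence of $C_1$ on $\omega(ab)$ enters --- and, using that $(\mathbb Z/p^m\mathbb Z)^\times$ is cyclic of order $p^{m-1}(p-1)$, apply Minkowski's convex body theorem to the lattice of multiplicative relations to obtain integers $c_1,\dots,c_k$, not all zero, with $\max_i|c_i|$ bounded in terms of $p$, $m$ and $k$, such that $\ell_1^{c_1}\cdots\ell_k^{c_k}\equiv1\pmod{p^m}$. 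Then
\[
\Lambda:=(a/b)^d\,\ell_1^{c_1}\cdots\ell_k^{c_k}-1
\]
is a $(k+1)$‑term $p$‑adic linear form with $\operatorname{ord}_p\Lambda\ge m$; it is nonzero because $a>b\ge1$ forces $a\ge2$ while no $\ell_i$ divides $ab$, so unique factorisation rules out $(a/b)^d=\ell_1^{-c_1}\cdots\ell_k^{-c_k}$. In $\Lambda$ only the base $a/b$ carries the expensive height $\log a$; the auxiliary bases are cheap.

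It remains to apply the $p$‑adic linear forms estimate of \cite{SY2} to $\Lambda$ and to choose $k$ minimising the resulting upper bound for $m$. The appearance of the precise expression $p\exp(-\log p/52\log\log p)\log a$, and of the constant $52$ --- any number exceeding $7e^2$ would do --- is exactly the outcome of this optimisation, in which the sharpened $p$‑dependence of \cite{SY2} is weighed against the growth with $k$ of the numerical constant, of the auxiliary exponents $c_i$ (and hence of the term $\log\max_i|c_i|$ in the estimate), and of the number of terms. I expect the real difficulty to reside in this balancing: arranging that the power of $p$, the powers of $\log p$, the height factors and $\log\max_i|c_i|$ in the bound of \cite{SY2} combine to deliver precisely the stated saving, while keeping $C_1$ dependent only on $\omega(ab)$. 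The lifting‑the‑exponent reduction and the construction of $\Lambda$ are routine; extracting from \cite{SY2} a bound of exactly this strength is where the work lies.
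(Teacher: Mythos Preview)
Your lifting-the-exponent reduction to bounding $m:=\operatorname{ord}_p\bigl((a/b)^d-1\bigr)$ is correct and matches the paper's argument. The strategic idea of inflating the linear form with $k\approx \log p/(c\log\log p)$ auxiliary primes is also exactly right, and is the content of Lemma~8. But the specific inflation you propose does not work.

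You want Minkowski to produce exponents $c_1,\dots,c_k$ with $\ell_1^{c_1}\cdots\ell_k^{c_k}\equiv 1\pmod{p^m}$. The relation lattice has index at most $p^{m-1}(p-1)$ in $\mathbb Z^k$, so the best Minkowski gives is $\max_i|c_i|\lesssim p^{m/k}$, hence $\log B\gtrsim (m/k)\log p$. Feeding this into Lemma~5 (where the bound is linear in $\log B$) yields an inequality of the shape
\[
m \;\le\; C(k,p)\,\log a\,\Bigl(\prod_i\log\ell_i\Bigr)\cdot\frac{m}{k}\log p,
\]
and since $C(k,p)$ carries a factor of $p$, the right-hand side always dominates the left for large $p$: the inequality is vacuous and yields no bound on $m$. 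The circularity is intrinsic --- you are asking Minkowski for a congruence modulo $p^m$, so the exponents necessarily know about $m$.

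The paper sidesteps this entirely. It writes $\theta^m-1=\alpha_1^{m}p_2^{m}\cdots p_k^{m}-1$ with $\alpha_1=\theta/(p_2\cdots p_k)$: all $k$ exponents are the \emph{same} integer $m$, of size comparable to the input $n$ (here $n=p-1$), so $\log B\approx\log n$ is fixed and independent of $\operatorname{ord}_p$. The price is that $h(\alpha_1)$ picks up an extra $\log(p_2\cdots p_k)\approx k\log k$, but this is harmless in the final optimisation. That is the missing idea: inflate the form by moving the auxiliary primes into the \emph{base} of one term, not by manufacturing new exponents via Minkowski.
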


\vskip.1in

If $a$ and $b$ are integers with $a>b>0$, $p$ is an odd prime number which does not divide $a b$ and $n\geqslant 2$ then, as in the proof of Theorem 2, 
$$
\text{\rm ord}_p (a^n - b^n)\leq \text{\rm ord}_p (a^{p-1} - b^{p-1}) + \text{\rm ord}_p n. 
$$
In particular if $p$ exceeds $C_1$ then 

$$ \text{\rm ord}_p (a^{p-1} - b^{p-1}) < ~ p\exp(-\log p/52\log\log p) \log a.$$

\noindent
Yamada \cite{Ya}, by making use of a refinement of an estimate of Bugeaud and Laurent \cite{BL} for linear forms in two $p$-adic logarithms, proved that there is a positive number $C_2$, which is effectively computable in terms of $\omega(a)$, such that 

\begin{equation} \label{eq10} 
\text{\rm ord}_p(a^{p-1}-1) < C_2(p  / (\log p)^2) \log a. 
\end{equation}

\noindent
By following our proof of Theorem 1 and using (10) in place of Lemma 8 it is possible to show that there exist positive numbers $C_3,C_4$ and $C_5$, which are effectively computable in terms of $\omega(a)$ such that if $n$ exceeds $C_3$ then 

$$P(a^n - 1) > C_4 ~  \varphi (n) (\log n \log \log n)^{\frac{1}{2}}$$

\noindent
and so, by Theorem 328 of \cite{HW}, 

\begin{equation} \label{eq11}
P (a^n-1) > C_5 ~ n (\log n / \log \log n)^\frac{1}{2}.
\end{equation} 

\noindent
This gives an alternative proof of the conjecture of Erd\H{o}s, although the lower bound (11) is weaker than the bound (8).\\

The research for this paper was done in part during visits to the Hong Kong University of Science and Technology, Institut des Hautes \'Etudes Scientifiques and the Erwin Schr\"odinger International Institute for Mathematical Physics and I would like to express my gratitude to these institutions for their hospitality.  In addition I wish to thank Professor Kunrui Yu for helpful remarks concerning the presentation of this article and for our extensive discussions on estimates for linear forms in p-adic logarithms which led to \cite{SY2}.

\section{Preliminary lemmas}

Let $\alpha$ and $\beta$ be complex numbers such that $(\alpha+\beta)^2$ and $\alpha\beta$ are non-zero integers and $\alpha/\beta$ is not a root of unity. We shall assume, without loss of generality, that
$$
|\alpha|\geq|\beta|.
$$
Observe that
$$
\alpha=\frac{\sqrt{r}+\sqrt{s}}{2},\quad \beta=\frac{\sqrt{r}-\sqrt{s}}{2}
$$
where $r$ and $s$ are non-zero integers with $|r|\neq|s|.$ Further $\mathbb{Q}(\alpha/\beta)=\mathbb{Q}(\sqrt{rs}).$ Note that $(\alpha^2-\beta^2)^2=rs$ and we may write $rs$ in the form $m^2d$ with $m$ a positive integer and $d$ a square-free integer so that $\mathbb{Q}(\sqrt{rs})=\mathbb{Q}(\sqrt{d}).$

For any algebraic number $\gamma$ let $h(\gamma)$ denote the absolute logarithmic height of $\gamma.$ In particular if $a_0(x-\gamma_1)\cdots(x-\gamma_d)$ in $\mathbb{Z}[x]$ is the minimal polynomial of $\gamma$ over $\mathbb{Z}$ then
$$
h(\gamma)=\frac{1}{d}\left(\log a_0+\sum^d_{j=1}\log\max(1,|\gamma_j|)\right).
$$

Notice that 
$$
\alpha\beta(x-\alpha/\beta)(x-\beta/\alpha)=\alpha\beta x^2-(\alpha^2+\beta^2)x+\alpha\beta=\alpha\beta x^2-((\alpha+\beta)^2-2\alpha\beta)x+\alpha\beta
$$
is a polynomial with integer coefficients and so either $\alpha/\beta$ is rational or the polynomial is a multiple of the minimal polynomial of $\alpha/\beta.$ Therefore we have
\begin{equation} \label{eq9}
h(\alpha/\beta)\leq\log|\alpha|.
\end{equation}

We first record a result describing the prime factors of $\Phi_n(\alpha,\beta).$

\begin{lem} \label{lem1}
Suppose that $(\alpha+\beta)^2$ and $\alpha\beta$ are coprime. If $n>4$ and $n\neq 6,12$ then $P(n/(3n))$ divides $\Phi_n(\alpha,\beta)$ to at most the first power. All other prime factors of $\Phi_n(\alpha,\beta)$ are congruent to $\pm 1\pmod{n}.$
\end{lem}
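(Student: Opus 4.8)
This statement is Lemma~6 of \cite{St2}; I sketch the line of argument. Since both assertions concern one prime at a time, the plan is to fix a prime $p$ dividing $\Phi_n(\alpha,\beta)$ and to pin down exactly how it can occur. The point of departure is the M\"obius inversion of \eqref{eq4},
\[
\Phi_n(\alpha,\beta)=\prod_{d\mid n}(\alpha^{d}-\beta^{d})^{\mu(n/d)},
\]
which shows in particular that $p\mid\alpha^{n}-\beta^{n}$. First I would observe that $p\nmid\alpha\beta$, since otherwise $p$ would divide both $\alpha$ and $\beta$, hence $(\alpha+\beta)^{2}$, contrary to coprimality. Fixing a prime $\mathfrak p$ of $\mathbb Q(\alpha/\beta)=\mathbb Q(\sqrt{rs})$ above $p$, let $e$ denote the multiplicative order of $\alpha/\beta$ modulo $\mathfrak p$ (the rank of apparition), so that $p\mid\alpha^{m}-\beta^{m}$ precisely when $e\mid m$; hence $e\mid n$, and $e$ is prime to $p$ because it divides the order of the unit group of a residue field of characteristic $p$.

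The first main step is a structural result. Working at a prime $\mathfrak P$ of $\mathbb Q(\alpha,\beta)=\mathbb Q(\sqrt r,\sqrt s)$ above $\mathfrak p$, I would apply the lifting-the-exponent formula for the $\mathfrak P$-adic order of $\alpha^{m}-\beta^{m}$, valid once $e\mid m$, and substitute it into the displayed product; a short M\"obius computation then shows that $\mathrm{ord}_{p}\Phi_n(\alpha,\beta)=0$ unless $n=e$ or $n/e$ is a power of $p$ exceeding $1$, and that in the latter case $p$ divides $\Phi_n(\alpha,\beta)$ exactly once. In particular, if $p\nmid n$ then necessarily $n=e$. (The prime $p=2$ requires the dyadic version of lifting-the-exponent, and the finitely many primes ramifying in $\mathbb Q(\sqrt r,\sqrt s)$ a separate but routine verification; the conclusion persists in both cases.)

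The second main step splits along this dichotomy. Suppose first $n=e$. Here I would invoke Frobenius: the nontrivial automorphism $\sigma$ of $\mathbb Q(\sqrt{rs})$ satisfies $\sigma(\alpha/\beta)=\beta/\alpha=(\alpha/\beta)^{-1}$, so according as $p$ splits or ramifies in $\mathbb Q(\sqrt{rs})$, or is inert, one has $(\alpha/\beta)^{p}\equiv\alpha/\beta$ or $(\alpha/\beta)^{p}\equiv(\alpha/\beta)^{-1}$ modulo $\mathfrak p$, whence $e\mid p-1$ or $e\mid p+1$; since $e=n$ this gives $p\equiv\pm1\pmod n$. Suppose instead $p\mid n$. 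Then $n\neq e$ since $p\nmid e$, so by the structural result $n=ep^{j}$ with $j\geq1$ and $p^{j}$ exactly dividing $n$, and $p$ divides $\Phi_n(\alpha,\beta)$ to exactly the first power. The Frobenius bound applied to $e$ still gives $e\mid p-1$ or $e\mid p+1$, so every prime $q\neq p$ dividing $n$ divides $e$ and hence satisfies $q\leq p+1$. Treating $p=2$ and $p=3$ by hand (where $e$ divides $1$, $2$, $3$ or $4$) and noting that for $p\geq5$ the equality $q=p+1$ is impossible, so that $q<p$, one finds that $p$ is the largest prime factor of $n/(3,n)$ save when $n\in\{3,6,12\}$. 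Since $n>4$ and $n\neq6,12$, it follows that $p=P(n/(3,n))$; thus $P(n/(3,n))$ is the only prime that can divide both $n$ and $\Phi_n(\alpha,\beta)$, and it divides $\Phi_n(\alpha,\beta)$ at most once, while every other prime factor of $\Phi_n(\alpha,\beta)$ is coprime to $n$ and so is $\equiv\pm1\pmod n$ by the previous paragraph.

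The step I expect to be the main obstacle is the last one: showing there is a single ``intrinsic'' prime, that it is $P(n/(3,n))$ and not merely $P(n)$, and that the exceptional values of $n$ are exactly $6$ and $12$ beyond $n\leq4$. This is a finite but delicate piece of bookkeeping, and it is precisely where the hypotheses are used in earnest---the coprimality of $(\alpha+\beta)^{2}$ and $\alpha\beta$ to keep $p\nmid\alpha\beta$ and to control the ramified primes, and $\alpha/\beta$ not a root of unity so that $e$ is finite and the cyclotomic products make sense. The accompanying care for $p=2$ and for the primes ramifying in $\mathbb Q(\sqrt r,\sqrt s)$ is the only other part of Lemma~6 of \cite{St2} that is not essentially mechanical.
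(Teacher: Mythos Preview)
Your proposal is correct and matches the paper's approach exactly: the paper's entire proof is the single sentence ``This is Lemma~6 of \cite{St2},'' and you open with precisely that identification before sketching the rank-of-apparition/lifting-the-exponent argument that underlies it. Your sketch is sound, including the bookkeeping that isolates $P(n/(3,n))$ rather than $P(n)$ and pins the exceptions to $n\in\{3,6,12\}$.
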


\begin{proof}
This is Lemma 6 of \cite{St2}. 
\end{proof}

Let $K$ be a finite extension of $\mathbb{Q}$ and let $\wp$ be a prime ideal in the ring of algebraic integers $\mathcal{O}_K$ of $K.$ Let $\mathcal{O}_\wp$ consist of 0 and the non-zero elements $\alpha$ of $K$ for which $\wp$ has a non-negative exponent in the canonical decomposition of the fractional ideal generated by $\alpha$ into prime ideals. Then let $P$ be the unique prime ideal of $\mathcal{O}_\wp$ and put $\overline{K_\wp}=\mathcal{O}_\wp/P.$ Further for any $\alpha$ in $\mathcal{O}_\wp$ we let $\overline{\alpha}$ be the image of $\alpha$ under the residue class map that sends $\alpha$ to $\alpha+P$ in $\overline{K_\wp}.$

Our next result is motivated by work of Lucas \cite{Lu2} and Lehmer \cite{Le}.

\begin{lem} \label{lem2}
Let $d$ be a square-free integer different from $1,$ $\theta$ be an algebraic integer of degree $2$ over $\mathbb{Q}$ in $\mathbb{Q}(\sqrt{d})$ and let $\theta'$ denote the algebraic conjugate of $\theta$ over $\mathbb{Q}.$ Suppose that $p$ is a prime which does not divide $2\theta\theta'.$ Let $\wp$ be a prime ideal of the ring of algebraic integers of $\mathbb{Q}(\sqrt{d})$ lying above $p.$ The order of $\overline{\theta/\theta'}$ in $(\overline{\mathbb{Q}(\sqrt{d}})_\wp)^\times$ is a divisor of $2$ if $p$ divides $(\theta^2-\theta'^2)^2$ and a divisor of $p-(d/p)$ otherwise.
\end{lem}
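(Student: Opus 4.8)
The plan is to work in the residue field $\overline{\mathbb{Q}(\sqrt d)}_\wp$, which is a finite field of characteristic $p$, and to track the image $\overline{\theta/\theta'}$ there. Since $p\nmid 2\theta\theta'$, both $\theta$ and $\theta'$ lie in $\mathcal{O}_\wp$ and have nonzero image, so $\overline{\theta/\theta'}$ is a well-defined element of the multiplicative group $(\overline{\mathbb{Q}(\sqrt d)}_\wp)^\times$; call its order $e$. The key dichotomy is whether $p$ splits, ramifies, or is inert in $\mathbb{Q}(\sqrt d)$, which is governed by the Kronecker symbol $(d/p)$ (here $p$ is odd since $p\nmid 2$). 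First I would dispose of the case $p\mid(\theta^2-\theta'^2)^2$: then $p\mid(\theta-\theta')(\theta+\theta')$, so in the residue field either $\overline\theta=\overline{\theta'}$, giving $\overline{\theta/\theta'}=1$, or $\overline\theta=-\overline{\theta'}$, giving $\overline{\theta/\theta'}=-1$; in both cases $e\mid 2$, as claimed. (Note $p\nmid 2$ forces $\theta-\theta'\not\equiv\theta+\theta'$, so these are genuinely the only options.)

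For the main case, assume $p\nmid(\theta^2-\theta'^2)^2$, equivalently $\overline\theta\neq\pm\overline{\theta'}$. I would split according to $(d/p)$. If $(d/p)=-1$, then $p$ is inert, $\overline{\mathbb{Q}(\sqrt d)}_\wp\cong\mathbb{F}_{p^2}$, and the nontrivial Galois automorphism reduces to the Frobenius $x\mapsto x^p$; hence $\overline{\theta'}=\overline\theta^{\,p}$ and so $\overline{\theta/\theta'}=\overline\theta^{\,1-p}$. Therefore the order $e$ of $\overline{\theta/\theta'}$ divides $p+1=p-(d/p)$, since $(\overline\theta^{\,1-p})^{p+1}=\overline\theta^{\,1-p^2}=1$ as $\overline\theta\in\mathbb{F}_{p^2}^\times$ has order dividing $p^2-1$. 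If $(d/p)=1$, then $p$ splits and $\overline{\mathbb{Q}(\sqrt d)}_\wp\cong\mathbb{F}_p$, so both $\overline\theta,\overline{\theta'}\in\mathbb{F}_p^\times$ and $\overline{\theta/\theta'}\in\mathbb{F}_p^\times$ has order dividing $p-1=p-(d/p)$.

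The one point requiring a little care — and the step I expect to be the main obstacle — is the ramified case $(d/p)=0$, i.e. $p\mid d$. Here $\wp^2=(p)$, the residue field is still $\mathbb{F}_p$, and $\sqrt d\equiv 0\pmod\wp$, so $\overline\theta$ and $\overline{\theta'}$ both equal the reduction of the ``rational part'' of $\theta$; that is, writing $\theta=\tfrac{u+v\sqrt d}{2}$ (or $u+v\sqrt d$) one gets $\overline\theta=\overline{\theta'}$, forcing $\overline{\theta/\theta'}=1$ and $e=1=p-(d/p)$. But one must check this is consistent with the case hypothesis $p\nmid(\theta^2-\theta'^2)^2$: indeed $(\theta^2-\theta'^2)^2=(\theta-\theta')^2(\theta+\theta')^2$ and $\theta-\theta'=v\sqrt d$, so $p\mid d$ would make $p\mid(\theta-\theta')^2$ unless... — in fact this shows the ramified case simply cannot occur under the hypothesis $p\nmid(\theta^2-\theta'^2)^2$ when $v\neq 0$, and $v\neq 0$ since $\theta$ has degree $2$. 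So the ramified subcase is vacuous in the main case, and the two remaining subcases $(d/p)=\pm1$ above complete the proof. I would double-check the normalization of $\theta=\tfrac{u+v\sqrt d}2$ versus $u+v\sqrt d$ (depending on $d\bmod 4$) does not affect any of these divisibility statements, since the factors of $2$ are units modulo the odd prime $p$.
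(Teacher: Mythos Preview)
Your proof is correct and takes a genuinely different route from the paper's. The paper argues in the classical Lucas--Lehmer style: from $2\theta=(\theta+\theta')+(\theta-\theta')$ and $2\theta'=(\theta+\theta')-(\theta-\theta')$ it expands $p$-th powers binomially to obtain
\[
\frac{\theta^p-\theta'^p}{\theta-\theta'}\equiv\Bigl(\tfrac{d}{p}\Bigr)\pmod p,\qquad
\frac{\theta^p+\theta'^p}{\theta+\theta'}\equiv 1\pmod p,
\]
and then adds or subtracts these to force $(\theta/\theta')^{p\pm1}\equiv 1\pmod\wp$ according to the sign of $(d/p)$. Your argument instead identifies the residue field directly and, in the inert case, invokes the fact that the nontrivial Galois automorphism reduces to Frobenius, giving $\overline{\theta'}=\overline\theta^{\,p}$ and hence $(\overline{\theta/\theta'})^{p+1}=\overline\theta^{\,1-p^2}=1$ in one line; the split case is immediate since the residue field is $\mathbb{F}_p$. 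Your approach is shorter and more conceptual, and makes the role of the splitting behaviour transparent; the paper's approach is entirely elementary (no decomposition-group machinery) and stays close to the nineteenth-century computations of Lucas and Lehmer that motivate the lemma. Your handling of the ramified case---observing that $p\mid d$ forces $p\mid(\theta-\theta')^2$ since $\theta$ has degree $2$, so this subcase is vacuous under the hypothesis $p\nmid(\theta^2-\theta'^2)^2$---is correct and slightly more explicit than the paper, which simply records $p\nmid d$ as a consequence of the hypothesis.
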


\begin{proof}
We first note that $\theta$ and $\theta'$ are $p$-adic units. If $p$ divides $(\theta^2-\theta'^2)^2$ then either $p$ divides $(\theta-\theta')^2$ or $p$ divides $\theta+\theta'$ and in both cases $(\theta/\theta')^2\equiv 1\pmod{\wp}.$ Thus the order of $\overline{\theta/\theta'}$ divides 2.

Thus we may suppose that $p$ does not divide $2\theta\theta'(\theta^2-\theta'^2)^2$ and, in particular, $p\nmid d.$ Since
\begin{equation} \label{eq10}
2\theta=(\theta+\theta')+(\theta-\theta')\quad \text{and}\quad 2\theta'=(\theta+\theta')-(\theta-\theta')
\end{equation}
we see, on raising both sides of the above equations to the $p$-th power and subtracting, that $2^p(\theta^p-\theta'^p)-2(\theta-\theta')^p$ is $p(\theta-\theta')$ times an algebraic integer. Hence, since $p$ is odd,
$$
\frac{\theta^p-\theta'^p}{\theta-\theta'}\equiv (\theta-\theta')^{p-1}\pmod{p}.
$$
But
$$
(\theta-\theta')^{p-1}=((\theta-\theta')^2)^{\frac{p-1}{2}}\equiv\left(\frac{(\theta-\theta')^2}{p}\right)\pmod{p}
$$
and
$$
\left(\frac{(\theta-\theta')^2}{p}\right)=\left(\frac{d}{p}\right),
$$
so
\begin{equation} \label{eq11}
\frac{\theta^p-\theta'^p}{\theta-\theta'}\equiv\left(\frac{d}{p}\right)\pmod{p}.
\end{equation}

By raising both sides of equation \eqref{eq10} to the $p$-th power and adding we find that
$$
\frac{\theta^p+\theta'^p}{\theta+\theta'}\equiv(\theta+\theta')^{p-1}\pmod{p}
$$
and since $\left(\frac{(\theta+\theta')^2}{p}\right)=1,$
\begin{equation} \label{eq12}
\frac{\theta^p+\theta'^p}{\theta+\theta'}\equiv 1\pmod{p}.
\end{equation}
If $\left(\frac{d}{p}\right)=-1$ then adding \eqref{eq11} and \eqref{eq12} we find that
$$
2\frac{\theta^{p+1}-\theta'^{p+1}}{\theta^2-\theta'^2}\equiv 0\pmod{p}
$$
hence, since $p$ does not divide $2\theta\theta'(\theta^2-\theta'^2)^2,$
$$
(\theta/\theta')^{p+1}\equiv 1\pmod{\wp}
$$
and the result follows. If $\left(\frac{d}{p}\right)=1$ then subtracting \eqref{eq11} and \eqref{eq12} we find that
$$
2\theta\theta'\frac{\theta^{p-1}-\theta'^{p-1}}{\theta^2-\theta'^2}\equiv 0\pmod{p}
$$
hence, since $p$ does not divide $2\theta\theta'(\theta^2-\theta'^2)^2,$
$$
\left(\theta / \theta' \right)^{p-1}\equiv 1\pmod{\wp}
$$
and this completes the proof.
\end{proof}

Let $\ell$ and $n$ be integers with $n\geq 1$ and for each real number $x$ let $\pi(x,n,\ell)$ denote the number of primes not greater than $x$ and congruent to $\ell$ modulo $n.$ We require a version of the Brun-Titchmarsh theorem, see \cite[Theorem 3.8]{HR}.

\begin{lem} \label{lem3}
If $1\leq n< x$ and $(n,\ell)=1$ then
$$
\pi(x,n,\ell)<3x/(\varphi(n)\log(x/n)).
$$
\end{lem}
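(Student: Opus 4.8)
This is a form of the Brun--Titchmarsh inequality, and I would obtain it from an upper-bound sieve; the point of allowing the constant $3$ rather than the optimal $2$ is that it leaves room to absorb the secondary terms. Set $\mathcal{A}=\{m\in\mathbb{Z}:1\le m\le x,\ m\equiv\ell\pmod n\}$, so that $|\mathcal{A}|=x/n+O(1)$. If $p$ is a prime counted by $\pi(x,n,\ell)$ with $p>z$, then $p$ has no prime factor $q\le z$; and $(\ell,n)=1$ forces $(p,n)=1$, so those $q$ range over the primes $q\le z$ with $q\nmid n$. Writing $\mathcal{P}$ for this set of sieving primes and $S(\mathcal{A},\mathcal{P},z)$ for the sifting function, we get $\pi(x,n,\ell)\le z+S(\mathcal{A},\mathcal{P},z)$.

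For a squarefree $d$ composed of primes in $\mathcal{P}$ we have $(d,n)=1$, so the conditions $m\equiv\ell\pmod n$ and $d\mid m$ have a unique solution modulo $nd$ and hence $|\{m\in\mathcal{A}:d\mid m\}|=x/(nd)+r_d$ with $|r_d|\le 1$; thus we are in a one-dimensional sieve situation with local density $1/q$ at each prime. Feeding this into Selberg's $\Lambda^2$ sieve gives
$$
S(\mathcal{A},\mathcal{P},z)\le\frac{x/n}{G(z)}+E,\qquad G(z)=\sum_{\substack{d<z\\(d,n)=1}}\frac{\mu^{2}(d)}{\varphi(d)},
$$
where the error $E$ is a weighted sum of the remainders $r_d$ over $d<z^{2}$, so $E\ll z^{2}(\log z)^{2}$. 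To bound $G(z)$ from below I would combine the identity $n/\varphi(n)=\sum_{e\mid n}\mu^{2}(e)/\varphi(e)$ with the factorization of a squarefree integer into its part prime to $n$ and its part dividing $n$, obtaining $\frac{n}{\varphi(n)}G(z)\ge\sum_{k<z}\mu^{2}(k)/\varphi(k)\ge\log z$, hence $G(z)\ge\frac{\varphi(n)}{n}\log z$. Combining,
$$
\pi(x,n,\ell)\le z+E+\frac{x}{\varphi(n)\log z}.
$$

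Now choose the sieve level $z$ to be a power of $x/n$ somewhat below $(x/n)^{1/2}$, with a logarithmic adjustment so that the $z^{2}$-type error is pushed below $x/(\varphi(n)\log(x/n))$ (here one uses $\varphi(n)\gg n/\log\log n$). Then $\log z=\tfrac12\log(x/n)\,(1+o(1))$, the main term is $(2+o(1))x/(\varphi(n)\log(x/n))$, and once $x/n$ exceeds a suitable absolute constant the whole right-hand side is $\le 3x/(\varphi(n)\log(x/n))$. The remaining range, where $x/n$ is bounded, is dealt with by hand: if $\log(x/n)$ is small then $3x/(\varphi(n)\log(x/n))$ already exceeds $x\ge\pi(x,n,\ell)$, and the intermediate cases form a bounded family in which the sieve bound above with a small fixed $z$ still suffices.

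The step that genuinely needs care is the uniformity in $n$: getting the right shape $x/(\varphi(n)\log(x/n))$ is routine, but keeping the implied constant below $3$ for \emph{every} $n<x$ --- in particular for those $n$ with $\varphi(n)$ close to $n$ and $\log(x/n)$ only moderately large, where neither the sieve asymptotics nor the trivial bound is by itself decisive --- requires the error terms in Selberg's sieve to be controlled explicitly. This is exactly what is done in \cite[Theorem 3.8]{HR}, so in the paper we simply quote it.
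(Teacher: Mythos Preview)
Your proposal is correct and matches the paper's treatment: the paper gives no independent proof but simply invokes \cite[Theorem~3.8]{HR}, and your Selberg-sieve sketch is precisely the argument underlying that reference, ending with the same citation. There is nothing to add.
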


Our next result gives an estimate for the primes $p$ below a given bound which occur as the norm of an algebraic integer in the ring of algebraic integers of $\mathbb{Q}(\alpha/\beta).$ 

\begin{lem} \label{lem4}
Let $d$ be a squarefree integer with $d\neq 1$ and let $p_k$ denote the $k$-th smallest prime of the form $N\pi_k=p_k$ where $N$ denotes the norm from $\mathbb{Q}(\sqrt{d})$ to $\mathbb{Q}$ and $\pi_k$ is an algebraic integer in $\mathbb{Q}(\sqrt{d}).$ Let $\varepsilon$ be a positive real number. There is a positive number $C,$ which is effectively computable in terms of $\varepsilon$ and $d,$ such that if $k$ exceeds $C$ then
$$
\log p_k<(1+\varepsilon)\log k.
$$
\end{lem}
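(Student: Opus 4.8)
The plan is to show that the primes representable as norms of algebraic integers of $\mathbb{Q}(\sqrt d)$ — call them \emph{norm-primes} — have positive density among all primes; this follows from the Chebotarev density theorem applied to the narrow Hilbert class field of $\mathbb{Q}(\sqrt d)$. Once we know that the number of norm-primes up to $x$ exceeds $c(d)\,x/\log x$ for $x$ large, we simply invert: the $k$-th norm-prime is then at most $O_d(k\log k)$, and hence $\log p_k<(1+\varepsilon)\log k$ for all sufficiently large $k$.

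In detail, let $K=\mathbb{Q}(\sqrt d)$, let $D$ be its discriminant, and let $L$ be the narrow Hilbert class field of $K$; then $L/\mathbb{Q}$ is Galois, $L/K$ is unramified at every finite prime, $[L:K]$ equals the narrow class number $h^{+}$ of $K$, and the discriminant of $L$ — hence every constant that follows — is bounded by an effectively computable function of $d$. I would first establish the arithmetic equivalence: for $p\nmid D$, the prime $p$ is a norm-prime if and only if $p$ splits completely in $L$. Indeed, if $N\pi=p$ with $\pi\in\mathcal{O}_K$, then $(\pi)$ is a prime ideal of residue degree one, and since $N\pi=p>0$ the conjugates of $\pi$ share a sign, so after replacing $\pi$ by $-\pi$ we may take $\pi$ totally positive; thus $p$ splits in $K$ and the primes above it are narrowly principal, i.e.\ have trivial Frobenius in $\mathrm{Gal}(L/K)$. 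Conversely, if $p\nmid D$ and a prime $\wp\mid p$ of $K$ is narrowly principal, write $\wp=(\pi)$ with $\pi$ totally positive; then $N\pi=\pm p$, and total positivity forces $N\pi=p$. Hence, apart from the finitely many primes dividing $D$, the norm-primes are exactly the primes whose Frobenius class in $\mathrm{Gal}(L/\mathbb{Q})$ is trivial.

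I would then apply an effective form of the Chebotarev density theorem — equivalently here, the effective prime ideal theorem for $\zeta_L$ together with the observation that $p$ splits completely in $L$ precisely when $L$ possesses a residue-degree-one prime above $p$ — to obtain effectively computable positive numbers $c_1(d)$ and $c_2(d)$ such that the number of norm-primes not exceeding $x$ is at least $x/(c_1(d)\log x)$ for all $x\ge c_2(d)$. Taking $x=k^{1+\varepsilon/2}$ and noting that $k^{\varepsilon/2}\ge c_1(d)(1+\varepsilon/2)\log k$ once $k$ exceeds an effectively computable bound $C=C(\varepsilon,d)$, I conclude that there are at least $k$ norm-primes below $k^{1+\varepsilon/2}$, so that $p_k\le k^{1+\varepsilon/2}$ and therefore $\log p_k\le(1+\varepsilon/2)\log k<(1+\varepsilon)\log k$. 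The one genuinely delicate point — and the main obstacle — is the \emph{effectivity} of this lower bound: a Siegel zero $\beta$ of the quadratic $L$-function dividing $\zeta_L$ depresses the count of primes splitting completely in $L$. This is harmless because the lemma leaves an enormous margin — a lower bound of the form $x^{1-\varepsilon/4}$ would already suffice — so the ineffective Siegel bound may be replaced by the crude but effective bound $\beta<1-c/\sqrt{d_L}$, whose influence on the count is only $O(x^{1-c(d)})$ and is absorbed once $x$ is large in terms of $d$.
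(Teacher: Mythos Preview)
Your proof is correct and follows essentially the same route as the paper: identify norm-primes with primes having trivial Artin symbol in the narrow Hilbert class field of $\mathbb{Q}(\sqrt d)$, apply the effective Chebotarev theorem of Lagarias--Odlyzko to obtain a lower bound of order $x/\log x$ for their count, and invert. The only cosmetic differences are that the paper runs Chebotarev over $K_H/K$ (counting prime ideals of $K$ and then discarding the at most $x^{1/2}$ inert ones) rather than over $L/\mathbb{Q}$, and simply cites \cite{LO} without the explicit Siegel-zero discussion you give; both lead to $p_k\ll_d k\log k$ and hence to the stated inequality.
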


\begin{proof} Let $K=\mathbb{Q}(\sqrt{d})$ and denote the ring of algebraic integers of $K$ by $\mathcal{O}_K.$ A prime $p$ is the norm of an element $\pi$ of $\mathcal{O}_K$ provided that it is representable as the value of the primitive quadratic form $q_K(x,y)$ given by $x^2-dy^2$ if $d\not\equiv 1\pmod{4}$ and $x^2+xy+\left(\frac{1-d}{4}\right)y^2$ if $d\equiv 1\pmod{4}.$ By \cite[Chapter VII, (2.14)]{FT}, a prime $p$ is represented by $q_K(x,y)$ if and only if $p$ is not inert in $K$ and all prime ideals $\wp$ of $\mathcal{O}_K$ above $p$ have trivial narrow class in the narrow ideal class group of $K.$ Let $K_H$ be the strict Hilbert class field of $K.$ $K_H$ is normal over $K$ and $G,$ the Galois group of $K_H$ over $K,$ is isomorphic with the narrow ideal class group of $K$ and so $|G|=h^+,$ the strict ideal class number of $K,$ see Theorem 7.1.2 of \cite{Co2}. The prime ideals $\wp$ of $\mathcal{O}_K$ which do not ramify in $K_H$ and which are principal are the only prime ideals of $\mathcal{O}_K$ which do not ramify in $K_H$ and which split completely in $K_H,$ see Theorem 7.1.3 of \cite{Co2}. These prime ideals may be counted by the Chebotarev Density Theorem. Let $\left[\frac{K_H/K}{\wp}\right]$ denote the conjugacy class of Frobenius automorphisms corresponding to prime ideals $P$ of $\mathcal{O}_{K_H}$ above $\wp.$ In particular, for each conjugacy class $C$ of $G$ we define $\pi_C(x,K_H/K)$ to be the cardinality of the set of prime ideals $\wp$ of $\mathcal{O}_K$ which are unramified in $K_H,$ for which $\left[\frac{K_H/K}{\wp}\right]=C$ and for which $N_{K/\mathbb{Q}}\wp\leq x.$ Denote by $C_0$ the conjugacy class consisting of the identity element of $G.$ Note that the number of inert primes $p$ of $\mathcal{O}_K$ for which $N_{K/\mathbb{Q}}\ p\leq x$ is at most $x^{1/2}.$ Thus the number of primes $p$ up to $x$ for which $p$ is the norm of an element $\pi$ of $\mathcal{O}_K$ is bounded from below by
\begin{equation} \label{eq13}
\pi_{C_0}(x,K_H/K)-x^{1/2}.
\end{equation}
It follows from Theorem 1.3 and 1.4 of \cite{LO} that there is a positive number $C_1,$ which is effectively computable in terms of $d,$ such that for $x$ greater than $C_1$ the quantity \eqref{eq13} exceeds
$$
\frac{x}{2h^+\log x}.
$$
Further
$$
\frac{x}{2h^+\log x}>k
$$
when $x$ is at least $4h^+ k\log k$ and
\begin{equation} \label{eq14}
k/\log k>4h^+.
\end{equation}
Thus, provided \eqref{eq14} holds and $x$ exceeds $C_1,$
\begin{equation} \label{eq15}
p_k<4h^+k\log k.
\end{equation}
Our result now follows from \eqref{eq15} on taking logarithms.
\end{proof}

\section{Estimates for linear forms in $p$-adic logarithms of algebraic numbers}

Let $\alpha_1,\dots,\alpha_n$ be non-zero algebraic numbers and put $K=\mathbb{Q}(\alpha_1,\dots,\alpha_n)$ and $d=[K:\mathbb{Q}].$ Let $\wp$ be a prime ideal of the ring $\mathcal{O}_K$ of algebraic integers in $K$ lying above the prime number $p.$ Denote by $e_\wp$ the ramification index of $\wp$ and by $f_\wp$ the residue class degree of $\wp.$ For $\alpha$ in $K$ with $\alpha\neq 0$ let ord$_\wp\alpha$ be the exponent to which $\wp$ divides the principal fractional ideal generated by $\alpha$ in $K$ and put ord$_\wp0=\infty.$ For any positive integer $m$ let $\zeta_m=e^{2\pi i/m}$ and put $\alpha_0=\zeta_{2^u}$ where $\zeta_{2^u}\in K$ and $\zeta_{2^{u+1}}\not\in K.$

Suppose that $\alpha_1,\dots,\alpha_n$ are multiplicatively independent $\wp$-adic units in $K.$ Let $\overline{\alpha_0},\overline{\alpha_1},\dots,\overline{\alpha_n}$ be the images of $\alpha_0,\alpha_1,\dots,\alpha_n$ under the residue class map at $\wp$ from the ring of $\wp$-adic integers in $K$ onto the residue class field $\overline{K}_{\wp}$ at $\wp.$ For any set $X$ let $|X|$ denote its cardinality. Let $\langle \overline{\alpha_0},\overline{\alpha_1},\dots,\overline{\alpha_n}\rangle$ be the subgroup of $(\overline{K}_{\wp})^\times$ generated by $\overline{\alpha_0},\overline{\alpha_1},\dots,\overline{\alpha_n}.$ We define $\delta$ by
$$
\delta=1\quad \text{ if }\quad [K(\alpha_0^{1/2},\alpha_1^{1/2},\dots,\alpha_n^{1/2}):K]<2^{n+1}
$$
and
$$
\delta=(p^{f_\wp}-1)/|\langle\overline{\alpha_0},\overline{\alpha_1},\dots,\overline{\alpha_n}\rangle|
$$
if
\begin{equation} \label{eq16}
[K(\alpha_0^{1/2},\alpha_1^{1/2},\dots,\alpha_n^{1/2}):K]=2^{n+1}.
\end{equation}

\begin{lem} \label{lem5}
Let $p$ be a prime with $p\geq 5$ and let $\wp$ be an unramified prime ideal of $\mathcal{O}_K$ lying above $p.$ Let $\alpha_1,\dots,\alpha_n$ be multiplicatively independent $\wp$-adic units. Let $b_1,\dots,b_n$ be integers, not all zero, and put
$$
B=\max(2,|b_1|,\dots,|b_n|).
$$
Then
$$
\text{\rm ord}_\wp(\alpha_1^{b_1}\cdots\alpha_n^{b_n}-1)<Ch(\alpha_1)\cdots h(\alpha_n)\max(\log B,(n+1)(5.4n+\log d))
$$
where
\begin{equation*}
\begin{split}
C & =376(n+1)^{1/2}\left(7e\frac{p-1}{p-2}\right)^nd^{n+2}\log^*d\log(e^4(n+1)d)\cdot \\
& \qquad \max\left(\frac{p^{f_p}}{\delta}\left(\frac{n}{f_p\log p}\right)^n,e^nf_p\log p\right).
\end{split}
\end{equation*}
\end{lem}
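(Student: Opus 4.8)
The plan is to obtain Lemma 5 as a reformulation of the main lower bound for linear forms in $p$-adic logarithms of algebraic numbers established in \cite{SY2}, in which the dependence on the rational prime $p$ is sharpened precisely to the quantity $\max\bigl(p^{f_\wp}(n/(f_\wp\log p))^n/\delta,\ e^nf_\wp\log p\bigr)$ occurring in the displayed constant. That result supplies, for multiplicatively independent nonzero algebraic numbers $\alpha_1,\dots,\alpha_n$ generating a field $K$ of degree $d$, for an unramified prime ideal $\wp$ of $\mathcal O_K$ above a prime $p\ge 5$ at which all the $\alpha_i$ are units, and for rational integers $b_1,\dots,b_n$ not all zero, an upper bound for $\mathrm{ord}_\wp(\alpha_1^{b_1}\cdots\alpha_n^{b_n}-1)$ of the shape (a numerical factor depending only on $n$, $d$, $p$, $f_\wp$ and $\delta$) times $h(\alpha_1)\cdots h(\alpha_n)$ times a $\log$-term in the exponents. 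The first step is to match normalisations: one takes the height parameters in \cite{SY2} to be exactly $h(\alpha_i)$, which is admissible since each $\alpha_i$ lies in the degree-$d$ field $K$ and $h$ is the absolute logarithmic height, and one identifies the exponent $\log$-term with $\max(\log B,(n+1)(5.4n+\log d))$, the quantity $(n+1)(5.4n+\log d)$ being exactly the built-in threshold below which $\log B$ is replaced in \cite{SY2} (so that $\max(\log B,c)=\log\max(B,e^{c})$). Collecting the surviving numerical factors then reproduces the constant $C$; the auxiliary quantity $\alpha_0=\zeta_{2^u}$ and the Kummer-theoretic case split in the definition of $\delta$ carry over verbatim.

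Before invoking \cite{SY2} one checks the single nondegeneracy point: since $\alpha_1,\dots,\alpha_n$ are multiplicatively independent and $(b_1,\dots,b_n)\neq 0$, the element $\alpha_1^{b_1}\cdots\alpha_n^{b_n}$ is not $1$, so $\mathrm{ord}_\wp(\alpha_1^{b_1}\cdots\alpha_n^{b_n}-1)$ is a well-defined nonnegative integer and the asserted inequality is meaningful. The two branches of the definition of $\delta$ correspond to whether $[K(\alpha_0^{1/2},\alpha_1^{1/2},\dots,\alpha_n^{1/2}):K]$ equals $2^{n+1}$: in the full-degree case the argument of \cite{SY2} gains the factor $(p^{f_\wp}-1)/|\langle\overline{\alpha_0},\dots,\overline{\alpha_n}\rangle|$ through a descent inside the residue field $\overline{K}_\wp$, and otherwise one puts $\delta=1$; in either case the inequality of the lemma is exactly what \cite{SY2} yields.

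For completeness I record the route one would take to prove the estimate from first principles, namely the Gel'fond--Baker--Yu transcendence method in its $\wp$-adic form. One constructs an auxiliary object---a polynomial via Siegel's lemma, or in the interpolation-determinant variant a determinant---whose value and sufficiently many low-order $\wp$-adic derivatives vanish to high $\wp$-adic order along the one-parameter subgroup of $\mathbb{G}_m^n$ parametrised by $t\mapsto(b_1t,\dots,b_nt)$; one then applies a zero estimate on algebraic subgroups of $\mathbb{G}_m^n$ of Philippon--Masser--W\"ustholz type to control the rank of the resulting linear system, follows with an extrapolation step (incorporating $\alpha_0$ and the Kummer descent so as to produce the factor $1/\delta$), and finally compares the archimedean and $\wp$-adic sizes of the auxiliary quantity. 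The partial degrees of the auxiliary polynomial, the interpolation length and the number of derivatives are optimised so that the $p$-dependence emerges as the stated maximum rather than the cruder $p^{f_\wp}$.

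The step I expect to be the main obstacle, on the reformulation route, is the entirely clerical yet delicate bookkeeping of the numerical constants: extracting the factor $376(n+1)^{1/2}(7e(p-1)/(p-2))^nd^{n+2}\log^*d\,\log(e^4(n+1)d)$ in exactly this form, and confirming that $(n+1)(5.4n+\log d)$ is precisely the threshold against which $\log B$ is measured. No new idea is needed; one has only to track every inequality used in \cite{SY2} without loss. (On the from-scratch route the hard part would instead be the zero estimate together with the optimisation of parameters needed to secure the improved dependence on $p$.)
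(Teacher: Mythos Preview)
Your plan is correct and matches the paper's proof: Lemma~5 is obtained by specialising the Main Theorem of \cite{SY2} (with the unramified, $p\ge 5$ choices $c^{(1)}=1794$, $a^{(1)}=7(p-1)/(p-2)$, $a_0^{(1)}=2+\log 7$, $a_1^{(1)}=a_2^{(1)}=5.25$) and then simplifying the resulting constants. The concrete simplifications the paper carries out---which fill in the ``clerical bookkeeping'' you flagged---are Stirling's inequality applied to $(n+1)^{n+1}/n!$, the bounds $2^u\ge 2$ and $f_\wp\log p\ge\log 5$, Voutier's explicit Dobrowolski theorem \cite{Va3} to reduce the first term in the maximum defining $h^{(1)}$ to $\log B$, and the estimate $G_1\le(n+1)(5.4n+\log d)$ for the threshold $G_1$ appearing in \cite{SY2} (so $(n+1)(5.4n+\log d)$ is an upper bound for the built-in threshold, not the threshold itself as you wrote).
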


\begin{proof} We apply the Main Theorem of \cite{SY2} and in (1.16) we take $C_1(n,d,\wp,a)h^{(1)}$ in place of the minimum. Further (1.15) holds since our result is symmetric in the $b_i$'s. Next we note that, since $\wp$ is unramified and $p\geq 5,$ we may take $c^{(1)}=1794,$ $a^{(1)}=7\frac{p-1}{p-2},$ $a^{(1)}_0=2+\log 7$ and $a^{(1)}_1=a^{(1)}_2=5.25.$ We remark that condition \eqref{eq16} ensures that we may take $\{\theta_1,\dots,\theta_n\}$ to be $\{\alpha_1,\dots,\alpha_n\}.$ Finally the explicit version of Dobrowolski's Theorem due to Voutier \cite{Va3} allows us to replace the first term in the maximum defining $h^{(1)}$ by $\log B.$ Therefore we find that
$$
\text{ord}_\wp(\alpha_1^{b_1}\cdots\alpha_n^{b_n}-1)<C_1h(\alpha_1)\cdots h(\alpha_n)\max(\log B,G_1,(n+1)f_\wp\log p)
$$
where
$$
G_1=(n+1)((2+\log 7)n+5.25+\log((2+\log 7)n+5.25)+\log d)
$$
and, on denoting $\log\max(x,e)$ by $\log^*x,$
\begin{equation*}
\begin{split}
C_1 & =1794\left(7\left(\frac{p-1}{p-2}\right)\right)^n\frac{(n+1)^{n+1}}{n!}\,\frac{d^{n+2}\log^*d}{2^u(f_\wp\log p)^2} \\
&\qquad \cdot\max\left(\frac{p^{f_\wp}}{\delta}\left(\frac{n}{f_\wp\log p}\right)^n,e^nf_\wp\log p\right)\cdot \max(\log(e^4(n+1)d),f_\wp\log p).
\end{split}
\end{equation*}

Note that $2^u\geq 2$ and $f_\wp\log p\geq \log 5.$ Further, by Stirling's formula, see 6.1.38 of \cite{AS},
$$
\frac{(n+1)^{n+1}}{n!}\leq\frac{e^{n+1}(n+1)^{1/2}}{\sqrt{2\pi}}
$$
and so
\begin{equation} \label{eq17}
\text{ord}_\wp(\alpha_1^{b_1}\cdots\alpha_n^{b_n}-1)<C_2 h(\alpha_1)\cdots h(\alpha_n)\max\left(\frac{\log B}{\log 5},\frac{G_1}{\log 5},n+1\right)
\end{equation}
where
\begin{equation} \label{eq18}
\begin{split}
C_2 & =\frac{1794}{2}\,\frac{e}{\sqrt{2\pi}}(n+1)^{1/2}\left(7e\frac{p-1}{p-2}\right)^nd^{n+2}\log^*d \\
& \qquad \max\left(\frac{p^{f_\wp}}{\delta}\left(\frac{n}{f_\wp\log p}\right)^n,e^nf_\wp\log p\right)\frac{(\log(e^4(n+1)d)}{\log 5}.
\end{split}
\end{equation}
We next observe that 
$$
G_1\leq (n+1)(5.4n+\log d)
$$
and as a consequence
\begin{equation} \label{eq19}
\max\left(\frac{\log B}{\log 5},\frac{G_1}{\log 5},n+1\right)=\max\left(\frac{\log B}{\log 5},\frac{(n+1)(5.4n+\log d)}{\log 5}\right).
\end{equation}
The result now follows from \eqref{eq17}, \eqref{eq18} and \eqref{eq19}.
\end{proof}

\section{Further preliminaries}

Let $(\alpha+\beta)^2$ and $\alpha\beta$ be non-zero integers with $\alpha/\beta$ not a root of unity. We may suppose that $|\alpha|\geq|\beta|.$ Since there is a positive number $c_0$ which exceeds 1 such that $|\alpha|\geq c_0$ we deduce from Lemma 3 of \cite{St3}, see also Lemmas 1 and 2 of \cite{Sc6}, that there is a positive number $c_1$ which we may suppose exceeds $(\log c_0)^{-1}$ such that for $n>0$
\begin{equation} \label{eq20}
\log 2+n\log|\alpha|\geq\log|\alpha^n-\beta^n|\geq(n-c_1\log(n+1))\log|\alpha|.
\end{equation}
The proof of \eqref{eq20} depends upon an estimate for a linear form in the logarithms of two algebraic numbers due to Baker \cite{B1}.

For any positive integer $n$ let $\mu(n)$ denote the M\"obius function of $n.$ It follows from \eqref{eq4} that
\begin{equation} \label{eq21}
\Phi_n(\alpha,\beta)=\prod_{d\mid n}(\alpha^{n/d}-\beta^{n/d})^{\mu(d)}.
\end{equation}
We may now deduce, following the approach of \cite{Sc6} and \cite{St3}, our next result.

\begin{lem} \label{lem6}
There exists an effectively computable positive number $c$ such that if $n>2$ then
\begin{equation} \label{eq22}
|\alpha|^{\varphi(n)-cq(n)\log n}\leq|\Phi_n(\alpha,\beta)|\leq|\alpha|^{\varphi(n)+cq(n)\log n},
\end{equation}
where $q(n)=2^{\omega(n)}.$
\end{lem}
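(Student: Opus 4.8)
The plan is to take logarithms in \eqref{eq21} and to match the resulting sum term by term against the two-sided estimate \eqref{eq20}. Since $\mu(d)=0$ unless $d$ is squarefree, only the $q(n)$ squarefree divisors of $n$ contribute, and
$$
\log|\Phi_n(\alpha,\beta)|=\sum_{\substack{d\mid n\\ \mu(d)=1}}\log|\alpha^{n/d}-\beta^{n/d}|-\sum_{\substack{d\mid n\\ \mu(d)=-1}}\log|\alpha^{n/d}-\beta^{n/d}|.
$$
The arithmetic input is the identity $\sum_{d\mid n}\mu(d)(n/d)=\varphi(n)$, which is M\"obius inversion applied to $n=\sum_{d\mid n}\varphi(d)$; the analytic input is \eqref{eq20}, invoked with $n/d$ in place of $n$ for each divisor $d$ of $n$.

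For the upper bound in \eqref{eq22} I would apply the upper estimate of \eqref{eq20} to each term with $\mu(d)=1$ and the lower estimate to each term with $\mu(d)=-1$; collecting the main terms by the M\"obius identity and bounding the number of terms of either sign crudely by $q(n)$ gives
$$
\log|\Phi_n(\alpha,\beta)|\le\varphi(n)\log|\alpha|+q(n)\log 2+c_1q(n)\log(n+1)\log|\alpha|.
$$
Reversing the roles of the two estimates yields, symmetrically,
$$
\log|\Phi_n(\alpha,\beta)|\ge\varphi(n)\log|\alpha|-q(n)\log 2-c_1q(n)\log(n+1)\log|\alpha|.
$$

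It then remains to repackage the additive error terms in the multiplicative form demanded by \eqref{eq22}. Since $|\alpha|\ge c_0>1$ one has $\log 2\le(\log 2/\log c_0)\log|\alpha|$, and since $n>2$ the ratio $\log(n+1)/\log n$ is bounded; hence $q(n)\log 2+c_1q(n)\log(n+1)\log|\alpha|\le c\,q(n)(\log n)\log|\alpha|$ for a suitable effectively computable $c$. Substituting this (and enlarging $c$ if necessary so that it also serves the lower estimate) turns the two displayed inequalities into
$$
|\alpha|^{\varphi(n)-cq(n)\log n}\le|\Phi_n(\alpha,\beta)|\le|\alpha|^{\varphi(n)+cq(n)\log n}.
$$

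I do not expect a genuine obstacle here: the real content already resides in \eqref{eq20} (which rests on Baker's estimate for a linear form in two logarithms) and in the product formula \eqref{eq21}. The only step calling for care is this final repackaging, where the normalisation $|\alpha|\ge c_0>1$ together with the hypothesis $n>2$ is exactly what allows the additive errors $q(n)\log 2$ and $c_1q(n)\log(n+1)\log|\alpha|$ to be absorbed into the stated multiplicative shape $|\alpha|^{\pm cq(n)\log n}$.
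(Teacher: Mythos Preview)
Your proposal is correct and follows essentially the same route as the paper: take logarithms in \eqref{eq21}, apply \eqref{eq20} to each factor, use $\sum_{d\mid n}\mu(d)(n/d)=\varphi(n)$ for the main term, and bound the error by $q(n)$ times the per-term error. The paper is terser---it writes the error directly as a single absolute-value inequality and absorbs the $\log 2$ into the $c_1\log(n+1)\log|\alpha|$ term by invoking the standing assumption $c_1>(\log c_0)^{-1}$ rather than spelling out the repackaging---but the substance is identical.
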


\begin{proof}
By \eqref{eq21}
$$
\log|\Phi_n(\alpha,\beta)|=\sum_{d\mid n}\mu(d)\log|\alpha^{n/d}-\beta^{n/d}|
$$
and so by \eqref{eq20}
$$
\left|\log|\Phi_n(\alpha,\beta)|-\sum_{d\mid n}\mu(d)\frac{n}{d}\log|\alpha|\right|\leq\sum_{\substack{d\mid n \\ \mu(d)\neq 0}}c_1\log(n+1)\log|\alpha|
$$
since $c_1$ exceeds $(\log c_0)^{-1}.$ Our result now follows.
\end{proof}

\begin{lem} \label{lem7}
There exists an effectively computable positive number $c_1$ such that if $n$ exceeds $c_1$ then
\begin{equation} \label{eq23}
\log|\Phi_n(\alpha,\beta)|\geq\frac{\varphi(n)}{2}\log|\alpha|.
\end{equation}
\end{lem}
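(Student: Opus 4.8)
The plan is to derive \eqref{eq23} directly from Lemma \ref{lem6} together with elementary estimates on the arithmetic functions $\varphi$ and $q$. The lower bound in \eqref{eq22} gives, for $n>2$,
$$
\log|\Phi_n(\alpha,\beta)|\ge\bigl(\varphi(n)-c\,q(n)\log n\bigr)\log|\alpha|,
$$
where $c$ is effectively computable and $\log|\alpha|>0$. Thus it suffices to produce an effectively computable $c_1$ such that $c\,q(n)\log n\le\varphi(n)/2$, i.e.\ $\varphi(n)\ge 2c\,q(n)\log n$, whenever $n>c_1$.

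To this end I would compare $q(n)$ and $\varphi(n)$ multiplicatively. Since $q(n)=\prod_{p\mid n}2$ and $\varphi(n)=n\prod_{p\mid n}(1-1/p)$,
$$
\frac{q(n)}{\varphi(n)}=\frac1n\prod_{p\mid n}\frac{2p}{p-1}\le\frac{4}{3n}\,3^{\omega(n)},
$$
using $\tfrac{2p}{p-1}=4$ when $p=2$ and $\tfrac{2p}{p-1}\le 3$ for every prime $p\ge 3$. Now the maximal order of $\omega(n)$ is $(1+o(1))\log n/\log\log n$ (in an effective form), so $3^{\omega(n)}=\exp\!\bigl(O(\log n/\log\log n)\bigr)$, which is $o(n^{\varepsilon})$ for every $\varepsilon>0$. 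Hence $q(n)\log n/\varphi(n)\to 0$ as $n\to\infty$, and in particular $2c\,q(n)\log n\le\varphi(n)$ for all sufficiently large $n$; combined with the displayed lower bound for $\log|\Phi_n(\alpha,\beta)|$ this yields \eqref{eq23}.

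I do not expect any serious obstacle here: the statement is essentially the observation that $q(n)\log n$ has size $n^{o(1)}$ while $\varphi(n)$ has size $n^{1-o(1)}$, so the $c\,q(n)\log n$ correction term in Lemma \ref{lem6} is negligible for large $n$. The only point meriting attention is effectivity of the threshold $c_1$, which is ensured by using an explicit upper bound for $\omega(n)$ (equivalently, for the divisor function, see \cite{HW}) together with the fact that the constant $c$ of Lemma \ref{lem6} is already effectively computable.
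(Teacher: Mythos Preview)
Your proposal is correct and follows essentially the same approach as the paper: both invoke the lower bound of Lemma~\ref{lem6} and then observe that the correction term $c\,q(n)\log n$ is of size $n^{o(1)}$ while $\varphi(n)$ is of size $n^{1-o(1)}$, so that $c\,q(n)\log n\le\varphi(n)/2$ for all $n$ beyond an effectively computable threshold. The paper simply quotes the standard effective estimates $\varphi(n)>n/(2\log\log n)$ and $q(n)<n^{1/\log\log n}$ in place of your explicit multiplicative comparison, but the substance is identical.
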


\begin{proof}
For $n$ sufficiently large
$$
\varphi(n)>n/2\log\log n\quad \text{and}\quad q(n)<n^{1/\log\log n}.
$$
Since $|\alpha|\geq c_0>1$ it follows from \eqref{eq22} that if $n$ is sufficiently large
$$
|\Phi_n(\alpha,\beta)|>|\alpha|^{\varphi(n)/2},
$$
as required.
\end{proof}

\begin{lem} \label{lem8}
Let $n$ be an integer larger than $1,$ let $p$ be a prime which does not divide $\alpha\beta$ and let $\wp$ be a prime ideal of the ring of algebraic integers of $\mathbb{Q}(\alpha/\beta)$ lying above $p$ which does not ramify. Then there exists a positive number $C,$ which is effectively computable in terms of $\omega(\alpha\beta)$ and the discriminant of $\mathbb{Q}(\alpha/\beta),$ such that if $p$ exceeds $C$ then
$$
\text{ord}_\wp((\alpha/\beta)^n-1)<p\exp(-\log p/51.9\log\log p)\log|\alpha|\log n.
$$
\end{lem}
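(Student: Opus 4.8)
The plan is to derive the estimate from the $p$-adic linear-forms bound of Lemma~\ref{lem5}, the novelty being that I would apply it not to the two-term expression $(\alpha/\beta)^n-1$ but to an artificially lengthened linear form in which $\gamma:=\alpha/\beta$ is padded out by a large number $N-1$ of auxiliary multiplicatively independent $\wp$-adic units, carried along with exponent $0$; this costs nothing in the exponent of $n$ but improves the constant $C$ of Lemma~\ref{lem5} through its factor $(N/f_\wp\log p)^N$. From $\gamma^2-(\tfrac{(\alpha+\beta)^2}{\alpha\beta}-2)\gamma+1=0$ and $p\nmid\alpha\beta$ one sees that $\gamma$ is a $\wp$-adic unit, and by Vieta that its $\mathbb Q(\sqrt d)$-conjugate is $\gamma^{-1}$, so $N_{\mathbb Q(\sqrt d)/\mathbb Q}(\gamma)=1$. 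First I would enlarge $C$ past $|d|$ (controlled by the discriminant) so that $p\nmid d$, and dispose of the case $p\mid(\alpha^2-\beta^2)^2=m^2d$, i.e.\ $p\mid m$: there Lemma~\ref{lem2} gives that $\overline\gamma$ has order dividing $2$, and an elementary computation of $N(\gamma\pm1)$ together with $\mathrm{ord}_p(m^2d)=2\,\mathrm{ord}_p m\le(4\log|\alpha|+O(1))/\log p$ and lifting the exponent shows $\mathrm{ord}_\wp(\gamma^n-1)\le\mathrm{ord}_\wp(\gamma^2-1)+\mathrm{ord}_p n\le(O(1)+4\log|\alpha|+\log n)/\log p$, which is far below the claimed bound for large $p$. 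So from now on $p\nmid 2m^2d$; then the order of $\overline\gamma$ divides $p-(d/p)$, and if $p$ is inert in $\mathbb Q(\sqrt d)$ (so $f_\wp=2$) then, $\gamma$ being of norm $1$, $\overline\gamma$ lies in the subgroup of $\mathbb F_{p^2}^\times$ of norm-one elements, which has order $p+1$.

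Next I would build the auxiliary units. Using Lemma~\ref{lem4}, take primes $p_1<p_2<\cdots$ that are norms from $\mathbb Q(\sqrt d)$, avoid $p$ and the primes dividing $2d\,\alpha\beta$, and — after discarding boundedly many further candidates, the bound depending only on $\omega(\alpha\beta)$ and the discriminant — are such that $\alpha_0=\zeta_{2^u}$, $\gamma$, and the classes of $p_1,p_2,\dots$ remain independent in $\mathbb Q(\sqrt d)^\times/(\mathbb Q(\sqrt d)^\times)^2$; if $\gamma$ is a perfect power in $\mathbb Q(\sqrt d)^\times$ I would first replace it by a maximal root $\gamma^{1/2^k}$, whose square-class is then automatically independent of that of $\alpha_0$, at the cost of a bounded factor in $B$ and a compensating decrease of the height. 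For each $i$ let $\pi_i$ generate a prime ideal above $p_i$ and set $\rho_i=\pi_i/\pi_i'$, a $\wp$-adic unit of norm $1$ which is not a root of unity; taking norms shows $\gamma,\rho_1,\dots,\rho_{N-1}$ are multiplicatively independent for every $N$, one has $h(\rho_i)\le\tfrac12\log p_i+O(1)$, and by Lemma~\ref{lem4}, $\prod_{i<N}h(\rho_i)\le\big((\tfrac12+o(1))\log N\big)^N$, while $h(\gamma)\le\log|\alpha|$ by \eqref{eq9}.

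I would then apply Lemma~\ref{lem5} to the $N$-term form $\gamma^n\rho_1^0\cdots\rho_{N-1}^0-1=\gamma^n-1$, with $B=\max(2,n)=n$. The degree there is at most $2$; the term $(N+1)(5.4N+\log d)$ is $O(N^2)=O((\log p)^2)$; and in the constant of Lemma~\ref{lem5} one has $p^{f_\wp}/\delta\le p$ when $f_\wp=1$ (since $\delta\ge1$) and $p^{f_\wp}/\delta\le p^2/(p-1)<2p$ when $f_\wp=2$, because then $\overline{\alpha_0},\overline\gamma,\overline{\rho_1},\dots,\overline{\rho_{N-1}}$ all lie in the norm-one subgroup of order $p+1$ and the square-independence arranged above forces $\delta\ne1$. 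Hence
$$\mathrm{ord}_\wp(\gamma^n-1)<\mathrm{poly}(N)\cdot 2p\cdot\Big(\tfrac{7e(1+o(1))N\log N}{\log p}\Big)^{N}\cdot\log|\alpha|\cdot\big(\log n+O((\log p)^2)\big).$$
Taking $N=\big\lfloor\log p/(7e^2\log\log p)\big\rfloor$ makes $N\log N\sim\log p/(7e^2)$, so the base of the $N$th power tends to $e^{-1}(1+o(1))<1$ — this is precisely the point of padding, since $(N/f_\wp\log p)^N$ is a decreasing factor that defeats the growing factors $(7e(p-1)/(p-2))^N$, $d^{N+2}\le2^{N+2}$ and $\prod h(\rho_i)$ — and the $N$th power is $\exp(-\tfrac{\log p}{7e^2\log\log p}(1+o(1)))$; since $\sup_{A>0}\log(A/7e)/A=1/(7e^2)$ is attained at $A=7e^2$ and $7e^2=51.72\ldots<51.9$, the remaining $\mathrm{poly}(N)=\exp(O(\log\log p))$, the $O((\log p)^2)$ and all constants are absorbed, giving $\mathrm{ord}_\wp(\gamma^n-1)<p\exp(-\log p/51.9\log\log p)\log|\alpha|\log n$ for $p$ exceeding a bound of the required type.

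The hard part is controlling $\delta$ when $p$ is inert: a one-term form yields only $\mathrm{ord}_\wp(\gamma^n-1)\ll(p/\log p)\log|\alpha|\log n$, too weak, and in the lengthened form one must still prevent $p^{f_\wp}/\delta$ from being of size $p^2$ — with $p^2$ in place of $2p$ above the optimisation provably fails — which is exactly why the auxiliary units are taken of norm $1$ and independent modulo squares, and why $\gamma$ is first made non-power. The remaining ingredients — the supply and independence of enough norm-giving primes, the bound on their heights via Lemma~\ref{lem4}, and the accounting of the constant down to $51.9$ — are technical but routine.
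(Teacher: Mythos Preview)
Your proposal is correct and follows essentially the same route as the paper: inflate the $p$-adic linear form with auxiliary norm-one units $\rho_i=\pi_i/\pi_i'$ so that in the inert case all generators lie in the order-$(p+1)$ subgroup of $(\overline{K}_\wp)^\times$ (whence $p^{f_\wp}/\delta\le 2p$), then optimise the number of terms near $\log p/(7e^2\log\log p)$ so that the base of the $N$th power drops below $1$. The paper differs only cosmetically---it folds the auxiliary units into the base via $\alpha_1=\theta/(\theta_2\cdots\theta_k)$ with common exponent $m$ rather than carrying them with exponent $0$, writes $\alpha/\beta=\alpha_0^{\,j}\theta^{2^v}$ with $v$ maximal to secure the Kummer-degree condition \eqref{eq16} cleanly (your ``automatically independent of $\alpha_0$'' is not quite automatic without first stripping off that $\alpha_0^{\,j}$), and treats $\mathbb{Q}(\alpha/\beta)=\mathbb{Q}$ separately since there $\pi_i/\pi_i'=1$ and one must pad with the rational primes $p_i$ themselves; also, your phrase ``taking norms shows multiplicative independence'' should read ``comparing $[\pi_i]$-valuations'', since all the elements in question have norm $1$.
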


\begin{proof}
Let $c_1,c_2,\dots$ denote positive numbers which are effectively computable in terms of $\omega(\alpha\beta)$ and the discriminant of $\mathbb{Q}(\alpha/\beta).$ We remark that since $\alpha/\beta$ is of degree at most 2 over $\mathbb{Q}$ the discriminant of $\mathbb{Q}(\alpha/\beta)$ determines the field $\mathbb{Q}(\alpha/\beta)$ and so knowing it one may compute the class number and regulator of $\mathbb{Q}(\alpha/\beta)$ as well as the strict Hilbert class field of $\mathbb{Q}(\alpha/\beta)$ and the discriminant of this field. Further let $p$ be a prime which does not divide $6d\alpha\beta$ where $d$ is defined as in the first paragraph of \S2.

Put $K=\mathbb{Q}(\alpha/\beta)$ and
$$
\alpha_0=\begin{cases}
i & \text{if}\ i\in K \\
-1 & \text{otherwise}.
\end{cases}
$$
Let $v$ be the largest integer for which
\begin{equation} \label{eq24}
\alpha/\beta=\alpha^j_0\theta^{2^v},
\end{equation}
with $0\leq j\leq 3$ and $\theta$ in $K.$ To see that there is a largest such integer we first note that either there is a prime ideal $\mathfrak{q}$ of $\mathcal{O}_K,$ the ring of algebraic integers of $K,$ lying above a prime $q$ which occurs to a positive exponent in the principal fractional ideal generated by $\alpha/\beta$ or $\alpha/\beta$ is a unit. In the former case $h(\alpha/\beta)\geq 2^{v-1}\log q$ and in the latter case, since $\alpha/\beta$ is not a root of unity, there is a positive number $c_1,$ see \cite{Do}, such that $h(\alpha/\beta)\geq 2^vc_1.$

Notice from \eqref{eq24} that
\begin{equation} \label{eq25}
h(\alpha/\beta)=2^vh(\theta).
\end{equation}
Further, by Kummer theory, see Lemma 3 of \cite{BS},
\begin{equation} \label{eq26}
[K(\alpha^{1/2}_0,\theta^{1/2}):K]=4.
\end{equation}
Furthermore since $p\nmid\alpha\beta$ and $\alpha$ and $\beta$ are algebraic integers
\begin{equation} \label{eq27}
\text{ord}_\wp((\alpha/\beta)^n-1)\leq\text{ord}_\wp((\alpha/\beta)^{4n}-1).
\end{equation}

For any real number $x$ let $[x]$ denote the greatest integer less than or equal to $x.$ Put
\begin{equation} \label{eq28}
k=\left[\frac{\log p}{51.8\log\log p}\right].
\end{equation}
Then, for $p>c_2,$ we find that $k\geq 2$ and
\begin{equation} \label{eq29}
\max\left(p\left(\frac{k}{\log p}\right)^k,e^k\log p\right)=p\left(\frac{k}{\log p}\right)^k.
\end{equation}

Our proof splits into two cases. We shall first suppose that $\mathbb{Q}(\alpha/\beta)=\mathbb{Q}$ so that $\alpha$ and $\beta$ are integers. For any positive integer $j$ with $j\geq 2$ let $p_j$ denote the $j-1$-th smallest prime which does not divide $p\alpha\beta.$ We put
\begin{equation} \label{eq30}
m=n2^{v+2}
\end{equation}
and
$$
\alpha_1=\theta/p_2\cdots p_k.
$$
Then
\begin{equation} \label{eq31}
\theta^m-1=\left(\frac{\theta}{p_2\cdots p_k}\right)^mp^m_2\cdots p^m_k-1=\alpha^m_1p^m_2\cdots p^m_k-1
\end{equation}
and by \eqref{eq24}, \eqref{eq27}, \eqref{eq30} and \eqref{eq31}
\begin{equation} \label{eq32}
\text{ord}_p((\alpha/\beta)^n-1)\leq\text{ord}_p(\alpha^m_1p^m_2\cdots p^m_k-1).
\end{equation}

Note that $\alpha_1,p_2,\dots,p_k$ are multiplicatively independent since $\alpha/\beta$ is not a root of unity and $p_2,\dots,p_k$ are primes which do not divide $p\alpha\beta.$ Further, since $p_2,\dots,p_k$ are different from $p$ and $p$ does not divide $\alpha\beta,$ we see that $\alpha_1,p_2,\dots,p_k$ are $p$-adic units.

We now apply Lemma 5 with $\delta=1$, $d=1,$ $f_\wp=1$ and $n=k$ to conclude that
\begin{equation} \label{eq33}
\begin{split}
\text{ord}_p(\alpha^m_1p^m_2\cdots p^m_k-1) & \leq c_3(k+1)^3\log p\left(7e\frac{p-1}{p-2}\right)^k \\
& \qquad \max\left(p\left(\frac{k}{\log p}\right)^k,e^k\right)(\log m)h(\alpha_1)\log p_2\cdots \log p_k.
\end{split}
\end{equation}

Put
$$
t=\omega(\alpha\beta).
$$
Let $q_i$ denote the $i$-th prime number. Note that
$$
p_k\leq q_{k+t+1}
$$
and thus
$$
\log p_2+\cdots+\log p_k\leq (k-1)\log q_{k+t+1}.
$$
By the prime number theorem with error term, for $k>c_4,$
\begin{equation} \label{eq34}
\log p_2+\cdots+\log p_k\leq 1.001(k-1)\log k.
\end{equation}
By the arithmetic geometric mean inequality
$$
\log p_2\cdots\log p_k\leq\left(\frac{\log p_2+\cdots+\log p_k}{k-1}\right)^{k-1}
$$
and so, by \eqref{eq34},
\begin{equation} \label{eq35}
\log p_2\cdots \log p_k\leq (1.001 \log k)^{k-1}.
\end{equation}

Since $h(\alpha_1)\leq h(\theta)+\log p_2\cdots p_k$ it follows from \eqref{eq34} that
\begin{equation} \label{eq36}
h(\alpha_1)\leq c_5h(\theta)k\log k.
\end{equation}
Further $m=2^{v+2}n$ is at most $n^{2^{v+2}}$ and so by \eqref{eq9} and \eqref{eq25}
\begin{equation} \label{eq37}
h(\theta)\log m\leq 4h(\alpha/\beta)\log n\leq 4\log |\alpha|\log n.
\end{equation}
Thus, by \eqref{eq29}, \eqref{eq32}, \eqref{eq33}, \eqref{eq35}, \eqref{eq36} and \eqref{eq37},
$$
\text{ord}_p((\alpha/\beta)^n-1)<c_6k^4\log p\left(7e\frac{p-1}{p-2}1.001\frac{k\log k}{\log p}\right)^kp\log|\alpha|\log n.
$$
Therefore, by \eqref{eq28}, for $p>c_7$
\begin{equation} \label{eq38}
\text{ord}_p((\alpha/\beta)^n-1)<pe^{-\frac{\log p}{51.9\log\log p}}\log|\alpha|\log n.
\end{equation}

We now suppose that $[\mathbb{Q}(\alpha/\beta):\mathbb{Q}]=2.$ Let $\pi_2,\dots,\pi_k$ be elements of $\mathcal{O}_K$ with the property that $N(\pi_i)=p_i$ where $N$ denotes the norm from $K$ to $\mathbb{Q}$ and where $p_i$ is a rational prime number which does not divide $2dp\alpha\beta.$ We now put $\theta_i=\pi_i/\pi'_i$ where $\pi'_i$ denotes the algebraic conjugate of $\pi_i$ in $\mathbb{Q}(\alpha/\beta).$ Notice that $p$ does not divide $\pi_i\pi'_i=p_i$ and if $p$ does not divide $(\pi_i-\pi'_i)^2$ then
$$
\left(\frac{(\pi_i-\pi'_i)^2}{p}\right)=\left(\frac{d}{p}\right),
$$
since $\mathbb{Q}(\alpha/\beta)=\mathbb{Q}(\sqrt{d})=\mathbb{Q}(\pi_i).$ Thus, by Lemma 2, the order of $\theta_i$ in $(\overline{\mathbb{Q}(\alpha/\beta})_\wp)^\times$ is a divisor of 2 if $p$ divides $\pi^2_i-\pi^{\prime 2}_i$ and a divisor of $p-\left(\frac{d}{p}\right)$ otherwise. Since $p$ is odd and $p$ does not divide $d$ we conclude that the order of $\theta_i$ in $(\overline{\mathbb{Q}(\alpha/\beta})_\wp)^\times$ is a divisor of $p-\left(\frac{d}{p}\right).$

Put
\begin{equation} \label{eq39}
\alpha_1=\theta/\theta_2\cdots\theta_k.
\end{equation}
Then
$$
\theta^m-1=\left(\frac{\theta}{\theta_2\cdots\theta_k}\right)^m\theta^m_2\cdots\theta^m_k-1
$$
and, by \eqref{eq24}, \eqref{eq27}, \eqref{eq30} and \eqref{eq39},
\begin{equation} \label{eq40}
\text{ord}_\wp((\alpha/\beta)^n-1)\leq\text{ord}_\wp(\alpha^m_1\theta^m_2\cdots\theta^m_k-1).
\end{equation}

Observe that $\alpha_1,\theta_2,\dots,\theta_k$ are multiplicatively independent since $\alpha/\beta$ is not a root of unity, $p_2,\dots,p_k$ are primes which do not divide $\alpha\beta$ and the principal prime ideals $[\pi_i]$ for $i=2,\dots,k$ do not ramify since $p\nmid 2d.$ Further since $p_2,\dots,p_k$ are different from $p$ and $p$ does not divide $\alpha\beta$ we see that $\alpha_1,\theta_2,\dots,\theta_k$ are $\wp$-adic units.

Notice that
$$
K(\alpha^{1/2}_0,\theta^{1/2},\theta^{1/2}_2,\dots,\theta^{1/2}_k)=K(\alpha^{1/2}_0,\alpha^{1/2}_1,\theta^{1/2}_2,\dots,\theta^{1/2}_k).
$$
Further
\begin{equation} \label{eq41}
[K(\alpha^{1/2}_0,\theta^{1/2},\theta^{1/2}_2,\dots,\theta^{1/2}_k):K]=2^{k+1},
\end{equation}
since otherwise, by \eqref{eq26} and Kummer theory, see Lemma 3 of \cite{BS}, there is an integer $i$ with $2\leq i\leq k$ and integers $j_0,\dots,j_{i-1}$ with $0\leq j_b\leq 1$ for $b=0,\dots,i-1$ and an element $\gamma$ of $K$ for which
\begin{equation} \label{eq42}
\theta_i=\alpha^{j_0}_0\theta^{j_1}\theta^{j_2}_2\cdots\theta^{j_{i-1}}_{i-1}\gamma^2.
\end{equation} 
But the order of the prime ideal $[\pi_i]$ on the left-hand side of \eqref{eq42} is 1 whereas the order on the right-hand side of \eqref{eq42} is even which is a contradiction. Thus \eqref{eq41} holds.

Since $p$ does not divide the discriminant of $K$ and $[K:\mathbb{Q}]=2$ either $p$ splits, in which case $f_\wp=1$ and $\left(\frac{d}{p}\right)=1$, or $p$ is inert, in which case $f_\wp=2$ and $\left(\frac{d}{p}\right)=-1,$ see \cite{H}. Observe that if $\left(\frac{d}{p}\right)=1$ then
\begin{equation} \label{eq43}
p^{f_p}/\delta\leq p.
\end{equation}

Let us now determine $|\langle \overline{\alpha_0},\overline{\theta},\overline{\theta_2},\dots,\overline{\theta_k}\rangle|$ in the case $\left(\frac{d}{p}\right)=-1.$ By our earlier remarks the order of $\overline{\theta_i}$ is a divisor of $p+1$ for $i=2,\dots,k.$ Further by \eqref{eq24} since $N(\alpha/\beta)=1$ we find that $N(\theta)=\pm 1$ and so $N(\theta^2)=1.$ By Hilbert's Theorem 90, see eg.\ Theorem 14.35 of \cite{Co1}, $\theta^2=\varrho/\varrho'$ where $\varrho$ and $\varrho'$ are conjugate algebraic integers in $\mathbb{Q}(\alpha/\beta).$ Note that we may suppose that the principal ideals $[\varrho]$ and $[\varrho']$ have no principal ideal divisors in common. Further since $p$ does not divide $\alpha\beta$ and, since $\left(\frac{d}{p}\right)=-1,$ $[p]$ is a principal prime ideal of $\mathcal{O}_K$ and we note that $p$ does not divide $\varrho\varrho'.$ It follows from Lemma 2 that the order of $\theta^2$ in $(\overline{\mathbb{Q}(\alpha/\beta})_\wp)^\times$ is a divisor of $p+1$ hence $\theta$ has order a divisor of $2(p+1).$ Since $\alpha^4_0=1$ we conclude that 
$$
|\langle \overline{\alpha_0},\overline{\theta},\overline{\theta_2},\dots,\overline{\alpha_k}\rangle |\leq 2(p+1)
$$ 
and so
\begin{equation} \label{eq44}
\delta=(p^2-1)/|\langle \overline{\alpha_0},\overline{\theta},\overline{\theta_2},\dots,\overline{\theta_k}\rangle|\geq (p-1)/2.
\end{equation}

We now apply Lemma 5 noting, by \eqref{eq43} and \eqref{eq44}, that
$$
p^{f_p}/\delta\leq 2p^2/(p-1).
$$
Thus, by \eqref{eq29},
\begin{equation} \label{eq45}
\begin{split}
\text{ord}_\wp(\alpha^m_1\theta^m_2\cdots\theta^m_k-1) & \leq c_8(k+1)^3\log p\left(7e\frac{p-1}{p-2}\right)^k2^kp\left(\frac{k}{\log p}\right)^k\\ 
& \qquad (\log m) h(\alpha_1)h(\theta_2)\cdots h(\theta_k).
\end{split}
\end{equation}

Notice that $\theta_i=\pi_i/\pi'_i$ and that $p_i(x-\pi_i/\pi'_i)(x-\pi'_i/\pi_i)=p_ix^2-(\pi^2_i+\pi'^{2}_i)x+p_i$ is the minimal polynomial of $\theta_i$ over the integers since $[\pi_i]$ is unramified. Either the discriminant of $\mathbb{Q}(\alpha/\beta)$ is negative in which case $|\pi_i|=|\pi'_i|$ or it is positive in which case there is a fundamental unit $\varepsilon>1$ in $\mathcal{O}_K.$ We may replace $\pi_i$ by $\pi_i\varepsilon^u$ for any integer $u$ and so without loss of generality we may suppose that $p^{1/2}_i\leq|\pi_i|\leq p^{1/2}_i\varepsilon$ and consequently that $p_i^{1/2}\varepsilon^{-1}\leq|\pi'_i|\leq p_i^{1/2}.$ Therefore
$$
h(\theta_i)\leq\frac{1}{2}\log p_i\varepsilon^2=\frac{1}{2}\log p_i+\log\varepsilon\quad \text{for}\ d>0
$$
and
$$
h(\theta_i)\leq\frac{1}{2}\log p_i\quad \text{for}\ d<0.
$$
Let us put
$$
R=\begin{cases}
\log\varepsilon & \text{for}\ d>0 \\
0 & \text{for}\ d<0.
\end{cases}
$$
Then
\begin{equation} \label{eq46}
h(\theta_i)\leq\frac{1}{2}\log p_i+R
\end{equation}
for $i=2,\dots,k.$ In a similar fashion we find that
\begin{equation} \label{eq47}
h(\theta_2\cdots\theta_k)\leq\frac{1}{2}\log p_2\cdots p_k+R,
\end{equation}
and so
\begin{equation} \label{eq48}
h(\alpha_1)\leq h(\theta)+\frac{1}{2}\log p_2\cdots p_k+R.
\end{equation}

Put
$$
t_1=\omega(2dp\alpha\beta).
$$
Let $q_i$ denote the $i$-th prime number which is representable as the norm of an element of $\mathcal{O}_K.$ Note that
$$
p_k\leq q_{k+t_1}
$$
and thus
$$
\log p_2+\cdots+\log p_k\leq (k-1)\log q_{k+t_1}.
$$
Therefore by Lemma 4 for $k>c_9,$
\begin{equation} \label{eq49}
\log p_2+\cdots+\log p_k\leq 1.0005(k-1)\log k
\end{equation}
and so, as for the proof of \eqref{eq35},
$$
\log p_2\cdots\log p_k\leq(1.0005\log k)^{k-1}.
$$
Accordingly, since $p_k\geq k,$ for $k>c_{10}$
\begin{equation} \label{eq50}
2^{k-1}h(\theta_2)\cdots h(\theta_k)\leq (\log p_2+2R)\cdots (\log p_k+2R)\leq(1.001\log k)^{k-1}.
\end{equation}
Furthermore as for the proof of \eqref{eq36} and \eqref{eq37} we find that from \eqref{eq48},
\begin{equation} \label{eq51}
h(\alpha_1)\leq c_{11}h(\theta)k\log k
\end{equation}
and, from \eqref{eq9}, \eqref{eq25} and \eqref{eq30},
\begin{equation} \label{eq52}
h(\theta)\log m\leq 8\log |\alpha|\log n.
\end{equation}

Thus by  \eqref{eq40}, \eqref{eq45}, \eqref{eq48}, \eqref{eq50}, \eqref{eq51} and \eqref{eq52},
\begin{equation} \label{eq53}
\text{ord}_\wp((\alpha/\beta)^n-1)<c_{12}k^4\log p\left(7e\left(\frac{p-1}{p-2}\right)1.001\frac{k\log k}{\log p}\right)^k p\log|\alpha|\log n.
\end{equation}
Therefore, by \eqref{eq28}, for $p>c_{13}$ we again obtain \eqref{eq38} and the result follows.
\end{proof}

\section{Proof of Theorem 1}

Let $c_1,c_2,\dots$ denote positive numbers which are effectively computable in terms of $\omega(\alpha\beta)$ and the discriminant of $\mathbb{Q}(\alpha/\beta).$ Let $g$ be the greatest common divisor of $(\alpha+\beta)^2$ and $\alpha\beta.$ Note that $\varphi(n)$ is even for $n>2$ and that
$$
\Phi_n(\alpha,\beta)=g^{\varphi(n)/2}\Phi_n(\alpha_1,\beta_1)
$$
where $\alpha_1=\alpha/\sqrt{g}$ and $\beta_1=\beta/\sqrt{g}.$ Further $(\alpha_1+\beta_1)^2$ and $\alpha_1\beta_1$ are coprime and plainly
$$
P(\Phi_n(\alpha,\beta))\geq P(\Phi_n(\alpha_1,\beta_1)).
$$
Therefore we may assume, without loss of generality, that $(\alpha+\beta)^2$ and $\alpha\beta$ are coprime non-zero integers. 

By Lemma 7 there exists $c_1$ such that if $n$ exceeds $c_1$ then
\begin{equation} \label{eq54}
\log|\Phi_n(\alpha,\beta)|\geq\frac{\varphi(n)}{2}\log|\alpha|.
\end{equation}

On the other hand
\begin{equation} \label{eq55}
\Phi_n(\alpha,\beta)=\prod_{p\mid\Phi_n(\alpha,\beta)}p^{\text{ord}_p\Phi_n(\alpha,\beta)} \qquad \qquad.
\end{equation}
If $p$ divides $\Phi_n(\alpha, \beta)$ then, by \eqref{eq4}, $p$ does not divide $\alpha\beta$ and so
\begin{equation} \label{eq56}
\text{ord}_p\Phi_n(\alpha,\beta)\leq\text{ord}_\wp((\alpha/\beta)^n-1), 
\end{equation}
where $\wp$ is a prime ideal of $\mathcal{O}_K$  lying above $p.$ By Lemma 1 if $p$ divides $\Phi_n(\alpha,\beta)$ and $p$ is not $P(n/(3,n))$ then $p$ is at least $n-1$ and thus for $n>c_2$, by Lemma 8,
\begin{equation} \label{eq57}
\text{ord}_\wp((\alpha/\beta)^n-1)<p\exp(-\log p/51.9 \log\log p)\log|\alpha|\log n.
\end{equation}

Put
$$
P_n=P(\Phi_n(\alpha,\beta)).
$$
Then, by \eqref{eq55} and Lemma 1,
\begin{equation} \label{eq58}
\log|\Phi_n(\alpha,\beta)|\leq\log n+\sum_{\substack{p\leq P_n \\ p\nmid n}}\log p\ \text{ord}_p\Phi_n(\alpha,\beta).
\end{equation}

Comparing \eqref{eq54} and \eqref{eq58} and using \eqref{eq56} and \eqref{eq57} we find that, for $n>c_3,$
$$
\varphi(n)\log|\alpha|<\sum_{\substack{p\leq P_n \\ p\nmid n}} c_4(\log p)p\exp(-\log p/51.9\log\log p)\log |\alpha|\log n
$$
hence
$$
\frac{\varphi(n)}{\log n}<(\pi(P_n,n,1)+\pi(P_n,n,-1))P_n\exp(-\log P_n/51.95\log\log P_n),
$$
and, by Lemma 3,
$$
c_5\frac{\varphi(n)}{\log n}<\frac{P^2_n}{\varphi(n)\log(P_n/n)}\exp(-\log P_n/51.95\log\log P_n).
$$
Since $\varphi(n)>c_6n/\log\log n,$
$$
P_n>n\exp(\log n/104\log\log n),
$$
for $n>c_7,$ as required.\\

\section{Proof of Theorem 2}

Since $p$ does not divide $a b$ 

$$\text{ord}_p ~ (a^n - b^n) = \text{ord}_p ((a/g)^n - (b/g)^n)$$ 

\noindent
where $g$ is the greatest common divisor of $a$ and $b$.  Thus we may assume, without loss of generality, that $a$ and $b$ are coprime. Put $u_n=a^n-b^n$ for $n=1,2,\cdots $ and   let $\ell=\ell(p)$ be the smallest positive integer for which $p$ divides $u_\ell$.  Certainly $\ell$ divides $p-1$.  Further, as in the proof of Lemma 3 of \cite{St2}, if $n$ and $m$ are positive integers then

$$(u_n, u_m) = u_{(n,m)}.$$

\noindent
Thus if $p$ divides $u_n$ then $p$ divides $u_{(n,\ell)}$.  By the minimality of $\ell$ we see that $(n,\ell)=\ell$ so that $\ell$ divides $n$.  In particular $\ell$ divides $p-1$.  Furthermore, by $(4)$, we see that

$$\text{ord}_p u_\ell = \text{ord}_p \Phi_\ell (a, b).$$

\noindent
If $\ell$ divides $n$ then, by Lemma 2 of \cite{St2},

\begin{equation} \label{eq62}
(u_n/u_{\ell}, u_{\ell}) ~ \text{divides} ~ n/{\ell},\\
\end{equation}

\noindent
 and so

\begin{equation} \label{eq63}
\text{ord}_p ~ u_{p-1} = \text{ord}_p u_{\ell}
\end{equation}\\

\noindent
Suppose that $p$ divides $\Phi_n(a,b)$.  Then $p$ divides $u_n$ and so $\ell$ divides $n$.  Put $n=t \ell p^k$ with $(t,p)=1$ and $k$ a non-negative integer.  Since $\Phi_n(a, b)$ divides $u_n/u_{n/t} ~\text{for} ~ t > 1$ we see from (62), since $(t,p)=1$, that $t=1$.  Thus $n=\ell p{^k}$.  For any positive integer $m$

$$u_{mp}/u_m = p b^{(m-1)p} + \binom{p}{2}b^{(m-2)p} u_m + \cdots + u_m^{p-1}$$ 

\noindent
and if p is not 2 and p divides $u_m$ then $\text{ord}_p(u_{mp}/u_m)=1$.  It then follows that if $p$ is an odd prime then 
$$\text{ord}_p \Phi_{\ell p ^k} (a, b) = 1 \qquad    \text{for} ~ k = 1, 2, \cdots .$$

If $n$ is a positive integer not divisible by $\ell = \ell(p)$ then $|u_{n}|_p=1$.  On the other hand if $\ell$ divides $n$ and $p$ is odd then 

\begin{equation}\label{eq64}
|u_n|_p=~|u_\ell|_p \cdot | n/\ell |_p  .
\end{equation}

\noindent
It now follows from (63) and (64) and the fact that $\ell \leq p-1$ that if $p$ is an odd prime and $\ell$ divides $n$ then

\begin{equation}\label{eq65}
|u_n|_p= ~~ |u_{p-1} |_p \cdot | n |_p.
\end{equation}

Therefore if $p$ is an odd prime and $n$ a positive integer 

\begin{equation}\label{eq66}
\text{ord}_p(a^n - b ^n) \leq \text{ord}_p(a^{p-1}-b^{p-1})  + \text{ord}_p n, 
\end{equation}

and our result now follows from (66) on taking $n=p-1$ in Lemma 8.


\begin{thebibliography}{99}




\bibitem{AS} M.~Abramowitz and I.A.~Stegun, {\em Handbook of Mathematical Functions with Formulas, Graphs and Mathematical Tables}, Dover Publications, New York, 1972.

\bibitem{B1} A.~Baker, A sharpening of the bounds for linear forms in logarithms, {\em Acta Arith.\/} {\bf 21} (1972), 117--129.

\bibitem{BS} A.~Baker and H.M.~Stark, On a fundamental inequality in number theory, {\em Ann.\ Math.\/} {\bf 94} (1971), 190--199.

\bibitem{Ba} A.S.~Bang, Taltheoretiske unders{\o}gelser, {\em Tidsskrift for Mat.\/} {\bf 4} (1886), 70--78, 130--137.

\bibitem{BHV} Y.~Bilu, G.~Hanrot and P.M.~Voutier, Existence of primitive divisors of Lucas and Lehmer numbers, {\em J.\ reine angew.\ Math.\/} {\bf 539} (2001), 75--122.

\bibitem{BV} G.D.~Birkhoff and H.S.~Vandiver, On the integral divisors of $a^n-b^n,$ {\em Ann.\ Math.\/} {\bf 5} (1904), 173--180.

\bibitem{BL} Y.~Bugeaud and M.~Laurent, Minoration effective de la distance $p$-adique entre puissances de nombres alg\'ebriques, {\em J. Number Theory} {\bf 61}, (1996) 311-342.

\bibitem{Ca} R.D.~Carmichael, On the numerical factors of the arithmetic forms $\alpha^n\pm\beta^n,$ {\em Ann.\ Math.\/} {\bf 15} (1913), 30--70.

\bibitem{Co1} H.~Cohn, {\em A classical invitation to algebraic numbers and class fields}, Springer-Verlag, New York, 1978.

\bibitem{Co2} H.~Cohn, Introduction to the construction of class fields, {\em Cambridge studies in advanced mathematics\/} 6, Cambridge University Press, Cambridge, 1985.

\bibitem{Di} L.E.~Dickson, {\em History of the theory of numbers}, Vol.~I, The Carnegie Institute of Washington, New York, 1952.

\bibitem{Do} E.~Dobrowolski, On a question of Lehmer and the number of irreducible factors of a polynomial, {\em Acta Arith.\/} {\bf 34} (1979), 391--401.

\bibitem{Du} L.K.~Durst, Exceptional real Lehmer sequences, {\em Pacific J.\ Math.\/} {\bf 9} (1959), 437--441.

\bibitem{E} P.~Erd\H{o}s, Some recent advances and current problems in number theory, {\em Lectures on modern mathematics}, Vol.~III (ed.~T.L.~Saaty), Wiley, New York, 1965, 196--244.

\bibitem{ES} P.~Erd\H{o}s and T.N.~Shorey, On the greatest prime factor of $2^p-1$ for a prime $p$ and other expressions, {\em Acta Arith.\/} {\bf 30} (1976), 257--265.

\bibitem{FT} A.~Fr\"ohlich and M.J.~Taylor, Algebraic number theory, {\em Cambridge studies in advanced mathematics} 27, Cambridge University Press, Cambridge, 1991.

\bibitem{G} A.O.~Gelfond, {\em Transcendental and algebraic numbers}, Dover Publications, New York, 1960.

\bibitem{HR} H.~Halberstam and H.-E.~Richert, {\em Sieve methods}, Academic Press, London, 1974.

\bibitem{HW} G.H.~Hardy and E.M.~Wright, An introduction to the theory of numbers, {\em Oxford University Press, 5th ed, Oxford} (1979).

\bibitem{H} E.~Hecke, Lectures on the theory of algebraic numbers, {\em Graduate Texts in Mathematics\/} 77, Springer-Verlag, New York, 1981.

\bibitem{J} R.~Juricevic, {\em Lehmer numbers with at least $2$ primitive divisors}, Ph.D.\ thesis, University of Waterloo, 2007.

\bibitem{Le} D.H.~Lehmer, An extended theory of Lucas' functions, {\em Ann.\ Math.\/} {\bf 31} (1930), 419--448.

\bibitem{LO} J.C.~Lagarias and A.M.~Odlyzko, Effective versions of the Chebotarev Density Theorem, in {\em Algebraic Number Fields} ({\em $L$-functions and Galois properties\/}), edited by A.~Fr\"ohlich, Academic Press, London, 1977, 409--464.

\bibitem{Lu1} E.~Lucas, Sur les rapports qui existent entre la th\'eorie des nombres et le calcul int\'egral, {\em C.R.\ Acad.\ Sci.\ Paris} {\bf 82} (1876), 1303--1305.

\bibitem{Lu2} E.~Lucas, Th\'eorie des fonctions num\'eriques simplement p\'eriodiques, {\em Amer.\ J.\ Math.\/} {\bf 1} (1878), 184--240, 289--321.

\bibitem{M} Y.~Matijasevic, Enumerable sets are diophantine (Russian), {\em Dokl.\ Akad.\ Nauk SSSR} {\bf 191} (1970), 279--282. Improved English translation: {\em Soviet Math.\ Doklady} {\bf 11} (1970), 354--357.

\bibitem{MP} L.~Murata and C.~Pomerance, On the largest prime factor of a Mersenne number, {\em CRM Proc.\ Lecture Notes} 36, H.~Kisilevsky and E.Z.~Goren, editors, Amer.\ Math.\ Soc., Providence, R.I., 2004, 209--218.

\bibitem{MW} R.~Murty and S.~Wong, The $ABC$ conjecture and prime divisors of the Lucas and Lehmer sequences, {\em Number Theory for the Millennium\/} III, edited by M.A.~Bennett, B.C.~Berndt, N.~Boston, H.G.~Diamond, A.J.~Hildebrand and W.~Philipp, A.K.~Peters, Natick, MA, 2002, 43--54.

\bibitem{R} A.~Rotkiewicz, On Lucas numbers with two intrinsic prime divisors, {\em Bull.\ Acad.\ Polon.\ Sci.\ S\'er.\ Math.\ Astr.\ Phys.\/} {\bf 10} (1962), 229--232.

\bibitem{Sc1} A.~Schinzel, On primitive prime factors of $a^n-b^n,$ {\em Proc.\ Cambridge Philos.\ Soc.\/} {\bf 58} (1962), 555--562.

\bibitem{Sc2} A.~Schinzel, The intrinsic divisors of Lehmer numbers in the case of negative discriminant, {\em Ark.\ Mat.\/} {\bf 4} (1962), 413--416.

\bibitem{Sc3} A.~Schinzel, On primitive prime factors of Lehmer numbers, I, {\em Acta Arith.\/} {\bf 8} (1963), 213--223.

\bibitem{Sc4} A.~Schinzel, On primitive prime factors of Lehmer numbers, II, {\em Acta Arith.\/} {\bf 8} (1963), 251--257.

\bibitem{Sc5} A.~Schinzel, On primitive prime factors of Lehmer numbers, III, {\em Acta Arith.\/} {\bf 15} (1968), 49--69.

\bibitem{Sc6} A.~Schinzel, Primitive divisors of the expression $A^n-B^n$ in algebraic number fields, {\em J.\ reine angew.\ Math.\/} {\bf 268/269} (1974), 27--33.

\bibitem{SS} T.N.~Shorey and C.L.~Stewart, On divisors of Fermat, Fibonacci, Lucas and Lehmer numbers, II, {\em J.\ London Math.\ Soc.\/} {\bf 23} (1981), 17--23.

\bibitem{St1} C.L.~Stewart, The greatest prime factor of $a^n-b^n,$ {\em Acta Arith.\/} {\bf 26} (1975), 427--433.

\bibitem{St2} C.L.~Stewart, On divisors of Fermat, Fibonacci, Lucas and Lehmer numbers, {\em Proc.\ London Math.\ Soc.\/} {\bf 35} (1977), 425--447.

\bibitem{St3} C.L.~Stewart, Primitive divisors of Lucas and Lehmer numbers, {\em Transcendence theory: advances and applications\/} (eds.~A.~Baker and D.W.~Masser), Academic Press, London, 1977, 79--92.

\bibitem{St4} C.L.~Stewart, On divisors of Fermat, Fibonacci, Lucas and Lehmer numbers, III, {\em J.\ London Math.\ Soc.} {\bf 28} (1983), 211--217.

\bibitem{SY1} C.L.~Stewart and K.~Yu, On the $abc$ conjecture, II, {\em Duke Math.\ J.\/} {\bf 108} (2001), 169--181.

\bibitem{Va1} P.M.~Voutier, Primitive divisors of Lucas and Lehmer sequences, II, {\em J.\ Th.\ Nombres Bordeaux} {\bf 8} (1996), 251--274.

\bibitem{Va2} P.M.~Voutier, Primitive divisors of Lucas and Lehmer sequences, III, {\em Math.\ Proc.\ Camb.\ Phil.\ Soc.\/} {\bf 123} (1998), 407--419.

\bibitem{Va3} P.M.~Voutier, An effective lower bound for the height of algebraic numbers, {\em Acta Arith.\/} {\bf 74} (1996), 81--95.

\bibitem{W} M.~Ward, The intrinsic divisors of Lehmer numbers, {\em Ann.\ Math.\/} {\bf 62} (1955), 230--236.

\bibitem{Ya} T.~Yamada, A note on the paper by Bugeaud and Laurent ``Minoration effective de la distance $p$-adique entre puissances de nombres alg\'ebriques", {\em J. Number Theory} {\bf 130}, (2010) 1889-1897.

\bibitem{Y} K.~Yu, $P-$adic logarithmic forms and group varieties I, {\em J.\ reine angew.\ Math.\/} {\bf 502} (1998), 29--92.

\bibitem{SY2} K.~Yu, New advances in $p$-adic theory of logarithmic forms, to appear.

\bibitem{Z} K.~Zsigmondy, Zur Theorie der Potenzreste, {\em Monatsh.\ Math.\/} {\bf 3} (1892), 265--284.

\end{thebibliography}
\end{document}